\newtheorem{theorem}{Theorem}
\newtheorem{lemma}{Lemma}
\newtheorem{corollary}{Corollary}
\newtheorem{proposition}{Proposition}
\begin{document}

\author{Holger Heitsch \and Ren\'e Henrion}
\date{\small Weierstrass Institute for Applied Analysis and Stochastics\\
        Mohrenstra{\ss}e 39, 10117 Berlin, Germany\\[2ex] December 18, 2020}
\title{\bf An enumerative formula for the spherical cap discrepancy}
\maketitle

\begin{abstract}
\noindent
The spherical cap discrepancy is a widely used measure for how uniformly a sample of points on the sphere is distributed.
Being hard to compute, this discrepancy measure is typically replaced by some lower or upper estimates when designing 
optimal sampling schemes for the uniform distribution on the sphere. In this paper, we provide a fully explicit, easy
to implement enumerative formula for the spherical cap discrepancy. Not surprisingly, this formula is of combinatorial
nature and, thus, its application is limited to spheres of small dimension and moderate sample sizes.
Nonetheless, it may serve as a useful calibrating tool for testing the efficiency of sampling schemes and its
explicit character might be useful also to establish necessary optimality conditions when minimizing the discrepancy
with respect to a sample of given size.\\[1ex]
\textit{Keywords:} {Spherical cap discrepancy, uniform distribution on sphere, optimality conditions}
\end{abstract}

\section{Introduction}
\thispagestyle{empty}

A discrepancy measure $\Delta (\mu,\nu)$ quantifies the deviation between two given measures $\mu$ and $\nu$. On a local scale, one may compare the 
two measures with respect to a given set $B$ to obtain the so-called local discrepancy
\[
\Delta(B;\mu,\nu):=|\mu(B)-\nu(B)|.
\]
In order to arrive at a global deviation measure, one extends the comparison of the two measures to a collection $\mathcal{B}$ of sets and chooses an appropriate $L_p$ norm:
\begin{equation}\label{discgen}
\begin{aligned}
\Delta_p (\mu,\nu) &:= \left(\int_\mathcal{B}\Delta(B;\mu,\nu)^pd\omega (B)\right)^{1/p}\quad(p<\infty),\\
\Delta_\infty (\mu,\nu) &:=\,\sup_{B\in\mathcal{B}}\Delta(B;\mu,\nu)\,.\end{aligned}
\end{equation}
For surveys on discrepancies, we refer to, e.g., \cite{chen,drmota,mat}. Discrepancies play a fundamental role in many mathematical disciplines. For instance, in stochastic programming, the stability of optimal solutions and optimal values with respect to perturbations of the underlying probability measure can be expected only for a problem-adapted choice of a discrepancy \cite{rom}. 

The focus of the present paper will be on the so-called {\it spherical cap discrepancy}. Our interest in this quantity
comes from the algorithmic solution of optimization problems subject to probabilistic constraints. One approach here relies on the so-called {\it spheric-radial decomposition} of random vectors having elliptically symmetric distribution (e.g., Gaussian). This approach allows for a representation of the decision-dependent probability of some random inequality system as well as of its gradient as integrals with respect to the uniform distribution on a sphere \cite{ack}. Hence, for an efficient numerical approximation of these integrals by finite sums, one has to make use of low discrepancy samples for that distribution. It is well known (see, e.g. \cite[p. 991]{brauchart}), that the resulting integration error tends to zero (for samples of increasing size) if and only if the spherical cap discrepancy associated with these samples tends to zero. This special discrepancy is obtained from our general setting \eqref{discgen} by defining $p:=\infty$, $\mu$ as the uniform measure on the sphere, $\nu$ as the empirical measure induced by the sample and $\mathcal{B}$ as the collection of all closed half spaces intersected with the sphere (caps).
To be more precise, we define the closed halfspace $H(w,t)$ parameterized by $(w,t)$, its
empirical and cap measures $\mu ^{emp}\left( w,t\right) $ and $\mu
^{cap}\left( w,t\right) $, respectively, and the spherical cap discrepancy $\Delta $
associated with the sample $\left\{ x^{1},\ldots ,x^{N}\right\} $ by 
\begin{eqnarray*}
H(w,t) &\!:=\!&\left\{ x\in \mathbb{R}^{n}|\left\langle w,x\right\rangle \geq
t\right\} \quad \left( w\in \mathbb{S}^{n-1},\,t\in \left[ -1,1\right]
\right), \\
\mu ^{emp}\left( w,t\right) &\!:=\!&N^{-1}\cdot \#\left\{ i\in \left\{ 1,\ldots
,N\right\} |x^{i}\in H(w,t)\right\}, \\
\mu ^{cap}\left( w,t\right) &\!:=\!&\mu \left( \mathbb{S}^{n-1}\cap
H(w,t)\right) \, \left( \mu =\mbox{law of uniform distribution on }%
\mathbb{S}^{n-1}\right), \\
\Delta (w,t) &\!:=\!&\left\vert \mu ^{emp}\left( w,t\right) -\mu ^{cap}\left(
w,t\right) \right\vert, \\
\Delta &\!:=\!&\sup_{w\in \mathbb{S}^{n-1},\,t\in \left[ -1,1\right] }\Delta
(w,t).
\end{eqnarray*}
The following explicit formula for the cap measure - not depending on
\thinspace $w\in \mathbb{S}^{n-1}$ - is well known

\begin{equation}
\mu ^{cap}\left( w,t\right) =C_{n}\cdot\left\{ 
\begin{array}{clc}
\displaystyle\int_{0}^{\arccos (t)}\sin ^{n-2}(\tau )d\tau , & \quad %
\mbox{if} & 0\leq t\leq 1, \\ 
\displaystyle1-\int_{0}^{\arccos (-t)}\sin ^{n-2}(\tau )d\tau , & \quad %
\mbox{if} & -1\leq t<0,%
\end{array}%
\right.  \label{capmeasure}
\end{equation}
where 
\begin{equation*}
C_{n}:=\frac{1}{\int_{0}^{\pi }\sin ^{n-2}(\tau )d\tau }
\end{equation*}
is the normalizing constant.
\begin{figure}[htb]
\center\includegraphics[width=13cm]{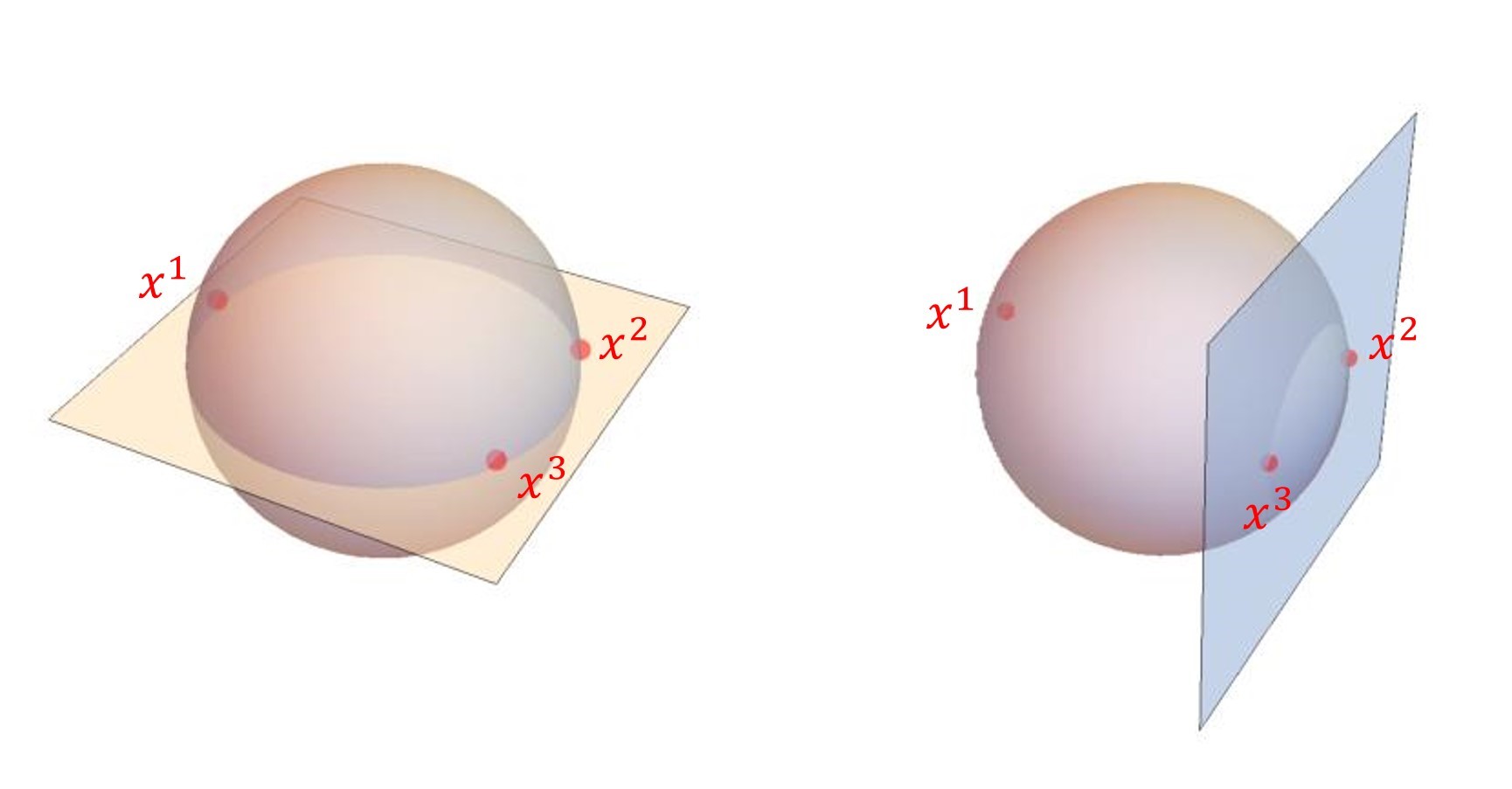}
\caption{\label{counter}Examples for spherical caps}
\end{figure}

\noindent
Figure \ref{counter} illustrates different spherical caps on $\mathbb{S}^2$  for a set of three points located in the x/y plane. This plane itself induces an upper and a lower closed halfspace each of them containing all three points (left picture). Hence, the associated upper and lower caps both have empirical measure 1 and cap measure 1/2. Therefore, the local discrepancies $\Delta (w,t)$ of these caps equal 1/2. Another hyperplane passes just through two of the three points (right picture) and the associated left and right halfspaces induce a big and a small cap. The measure of the small cap tends to zero when the two points converge to $-x_1$. Therefore, the local discrepancy related with this small cap tends to 2/3. 

To the best of our knowledge, no explicit formula for calculating the spherical cap discrepancy has been known so far. Rather the emphasis in the literature has been laid on suitable estimates with respect to more manageable quantities allowing for asymptotic derivations and constructions of efficient low discrepancy designs (see, e.g., \cite{brauchart,grab}). On the other hand, beyond the asymptotic `large sample' viewpoint it might be of some interest even for fixed moderate sample sizes to establish an easy enumerative formula enabling one to precisely compute the discrepancy and to compare different sampling schemes.

As a rule, $L_p$ discrepancies ($p<\infty$) are easier to compute than $L_\infty$ discrepancies as a consequence of the collection $\mathcal{B}$ of test sets typically having infinite cardinality \cite{doerr}. As far as explicit formulae for $L_\infty$ discrepancies are available (e.g., for rectangular or general polyhedral sets, see \cite{doerr,hen1,hen2}), they are of combinatorial nature which limits their application with respect to the dimension and the size of the sample. 
More precisely, it has been shown in \cite{gnewuch}, that computing the star discrepancy is an NP-hard problem. Moreover, the result is improved by \cite{gianno} who proved that it is indeed W[1]-hard.
Therefore, it is not surprising that a similar combinatorial aspect shows up in the enumerative formula for the spherical cap discrepancy we present in Theorem \ref{maintheo} below. In our numerical experiments, we apply the formula to spheres of dimension starting from 2 (2000 samples) up to 5 (100 samples). Even in this rather modest setting, the formula may prove useful for calibration purposes with respect to some given sampling scheme. For instance, in \cite[p. 1005]{brauchart} an easy to compute lower bound for the spherical cap discrepancy  is used in numerical experiments in order to confirm empirically a certain asymptotic order for a digital net based on a two-dimensional Sobol' point set on $\mathbb{S}^2$. Strictly speaking, the order obtained with respect to the lower bound transfers to the discrepancy only when the ratio between the true value and the lower estimate is approximately constant for increasing sample size. This is what we may confirm indeed in our numerical experiments.
We also use the proven formula in order to directly compare discrepancies of a few sampling schemes on $\mathbb{S}^2$ for sample sizes of up to 1000. The results verify the good quality of a sampling scheme via Lambert's equal-area transform proposed in \cite[p. 995]{brauchart}. 
Finally, we mention that the explicit character of the obtained formula might be of some interest for the derivation of necessary optimality conditions when minimizing the discrepancy as a function of a sample of fixed size.

\section{Preparatory Results}

We have the following elementary (semi-) continuity properties of both
considered measures:
\begin{lemma}
\label{ohs}$\mu ^{cap}$ is continuous and\ $\mu ^{emp}$ is upper
semicontinuous on $\mathbb{S}^{n-1}\times \left[ -1,1\right] $. Moreover,
the following relations are satisfied for all $w\in \mathbb{S}^{n-1}$ and $%
\,t\in \left[ -1,1\right] $: 
\begin{eqnarray}
\mu ^{emp}\left( w,t\right) +\mu ^{emp}\left( -w,-t\right) \nonumber&=&\\
1+N^{-1}\#\{i|x^i\in H(w,t)\cap H(-w,-t)\} &\geq &1
\label{spiegel}, \\
\mu ^{cap}\left( w,t\right) +\mu ^{cap}\left( -w,-t\right) &=&1.
\label{spiegel2}
\end{eqnarray}
\end{lemma}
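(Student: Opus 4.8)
The plan is to verify the four assertions separately, drawing the two statements about $\mu^{cap}$ from the closed-form expression \eqref{capmeasure} and the two statements about $\mu^{emp}$ from its representation as a normalized sum of indicators of closed halfspaces. A useful preliminary observation is that \eqref{capmeasure} may be written in the single form $\mu^{cap}(w,t)=C_n\int_0^{\arccos(t)}\sin^{n-2}(\tau)\,d\tau$, valid for all $t\in[-1,1]$; the two-case display is merely this same quantity rewritten via the reflection identity $\arccos(-t)=\pi-\arccos(t)$ together with the symmetry of $\sin^{n-2}$ about $\pi/2$. Working with the unified form streamlines both $\mu^{cap}$ claims.

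For the continuity of $\mu^{cap}$ I would note first that the right-hand side of \eqref{capmeasure} does not involve $w$, so only continuity in $t$ is at issue. The map $t\mapsto\arccos(t)$ is continuous on $[-1,1]$ and $s\mapsto\int_0^s\sin^{n-2}(\tau)\,d\tau$ is continuous (indeed $C^1$) in its upper limit, so the composition is continuous on each branch; the only point requiring care is the junction $t=0$, where both branches evaluate to $C_n\int_0^{\pi/2}\sin^{n-2}=\tfrac12$, using $\int_0^{\pi/2}\sin^{n-2}=\tfrac12\int_0^{\pi}\sin^{n-2}=\tfrac{1}{2C_n}$. To obtain \eqref{spiegel2} I would add $\mu^{cap}(w,t)$ and $\mu^{cap}(-w,-t)$ in the unified form and apply the substitution $\tau\mapsto\pi-\tau$ to get $\int_0^{\arccos(-t)}\sin^{n-2}=\int_{\arccos(t)}^{\pi}\sin^{n-2}$; the two integrals then combine to $\int_0^{\pi}\sin^{n-2}=1/C_n$, and multiplication by $C_n$ yields the value $1$. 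Equivalently and more geometrically, $\mathbb{S}^{n-1}\cap H(w,t)$ and $\mathbb{S}^{n-1}\cap H(-w,-t)$ cover the sphere and overlap only in the equatorial subsphere $\{x:\langle w,x\rangle=t\}$, which is $\mu$-null.

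For $\mu^{emp}$ I would write $\mu^{emp}(w,t)=N^{-1}\sum_{i=1}^N\mathbf{1}_{H(w,t)}(x^i)$, where each summand is the indicator of the closed set $\{(w,t):\langle w,x^i\rangle\ge t\}$, the preimage of $[0,\infty)$ under the continuous map $(w,t)\mapsto\langle w,x^i\rangle-t$. The indicator of a closed set is upper semicontinuous, and a finite nonnegatively-weighted sum of upper semicontinuous functions is again upper semicontinuous, giving the claim; upper (rather than full) continuity is exactly what one expects, since a point sitting on the bounding hyperplane is counted in the limit but may drop out under an arbitrarily small perturbation. For \eqref{spiegel} I would observe that $H(w,t)\cup H(-w,-t)=\mathbb{R}^n$ while $H(w,t)\cap H(-w,-t)=\{x:\langle w,x\rangle=t\}$, so counting each sample and applying inclusion--exclusion gives $\#\{i:x^i\in H(w,t)\}+\#\{i:x^i\in H(-w,-t)\}=N+\#\{i:x^i\in H(w,t)\cap H(-w,-t)\}$; dividing by $N$ produces the stated identity, and the bound $\ge1$ is immediate since the counting term is nonnegative.

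None of the four steps presents a genuine obstacle; the only points demanding a little attention are the gluing of the two branches of \eqref{capmeasure} at $t=0$ and the substitution identity $\int_0^{\arccos(-t)}\sin^{n-2}=\int_{\arccos(t)}^{\pi}\sin^{n-2}$ underlying \eqref{spiegel2}. Both reduce to the single symmetry of $\sin^{n-2}$ about $\pi/2$, so once that is recorded the remainder is entirely routine.
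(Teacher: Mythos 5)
Your proof is correct, and in substance it matches the paper's. The paper dismisses three of the four claims in one line --- continuity of $\mu^{cap}$ and \eqref{spiegel2} ``follow immediately from \eqref{capmeasure}'', and \eqref{spiegel} is ``an immediate consequence of the definitions'' --- and spends its effort only on the upper semicontinuity of $\mu^{emp}$, which it proves by a direct sequence argument: for $(w_k,t_k)\to(w,t)$, every sample point with $\langle w,x^i\rangle<t$ satisfies $\langle w_k,x^i\rangle<t_k$ for large $k$, so $\limsup_k \mu^{emp}(w_k,t_k)\leq\mu^{emp}(w,t)$. You reach the same conclusion by a slightly different packaging: writing $\mu^{emp}(w,t)=N^{-1}\sum_i \mathbf{1}_{\{(w,t)\,:\,\langle w,x^i\rangle\geq t\}}(w,t)$ and invoking that indicators of closed sets are upper semicontinuous and that finite nonnegative sums of upper semicontinuous functions are upper semicontinuous. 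This is the same underlying fact (strict inequalities survive small perturbations), but your modular version is arguably cleaner and makes transparent why only \emph{upper} semicontinuity can hold. Conversely, you supply the details the paper omits: the reflection identity $\int_0^{\arccos(-t)}\sin^{n-2}=\int_{\arccos(t)}^{\pi}\sin^{n-2}$ for \eqref{spiegel2}, the branch-gluing at $t=0$, and inclusion--exclusion for \eqref{spiegel}, all of which are correct. One small remark: your ``unified form'' $\mu^{cap}(w,t)=C_n\int_0^{\arccos(t)}\sin^{n-2}(\tau)\,d\tau$ tacitly reads the second branch of \eqref{capmeasure} as $1-C_n\int_0^{\arccos(-t)}\sin^{n-2}(\tau)\,d\tau$ (i.e., with $C_n$ applied only to the integral); that is indeed the only reading under which $\mu^{cap}$ is a probability, so your interpretation is the intended one, but it is worth stating explicitly since the displayed formula is ambiguous as typeset.
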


\begin{proof}
The continuity of $\mu ^{cap}$ and (\ref{spiegel2}) follow immediately from 
(\ref{capmeasure}), while (\ref{spiegel}) is an immediate consequence of the definitions.
Let $w\in \mathbb{S}^{n-1},\,t\in 
\left[ -1,1\right] $ and $\left( w_{k},t_{k}\right) \rightarrow \left(
w,t\right) $ an arbitrary sequence with $w_{k}\in \mathbb{S}%
^{n-1},\,t_{k}\in \left[ -1,1\right] $. Define 
\begin{equation*}
I:=\left\{ i\in \left\{ 1,\ldots ,N\right\} |x^{i}\notin H(w,t)\right\} ,
\end{equation*}
so that $\left\langle w,x^{i}\right\rangle <t$ for all $i\in I$. Then, by
continuity, there is some $k_{0}$, such that $\left\langle
w_{k},x^{i}\right\rangle <t_{k}$ - i.e., $x^{i}\notin H(w_{k},t_{k})$ - for
all $k\geq k_{0}$ and all $i\in I$. It follows that 
\begin{equation*}
\mu ^{emp}\left( w_{k},t_{k}\right) \leq \mu ^{emp}\left( w,t\right) \quad
\forall k\geq k_{0},
\end{equation*}
whence 
\begin{equation*}
\limsup_{k\rightarrow \infty }\mu ^{emp}\left( w_{k},t_{k}\right) \leq \mu
^{emp}\left( w,t\right) .
\end{equation*}
This proves the upper semicontinuity of $\mu ^{emp}$ on $\mathbb{S}%
^{n-1}\times \left[ -1,1\right] $.   
\end{proof}

%\bigskip
%\noindent
Figure \ref{discrepplot} plots the local discrepancy $\Delta (w,t)$ for the unit circle $\mathbb{S}^1$. As can be seen, it is a highly irregular, discontinuous (actually neither upper nor lower semicontinuous yet piecewise smooth) function. Therefore it is not à priori evident that the supremum in the definition of the discrepancy is attained.
\begin{figure}[htb]
\center\includegraphics[width=12cm]{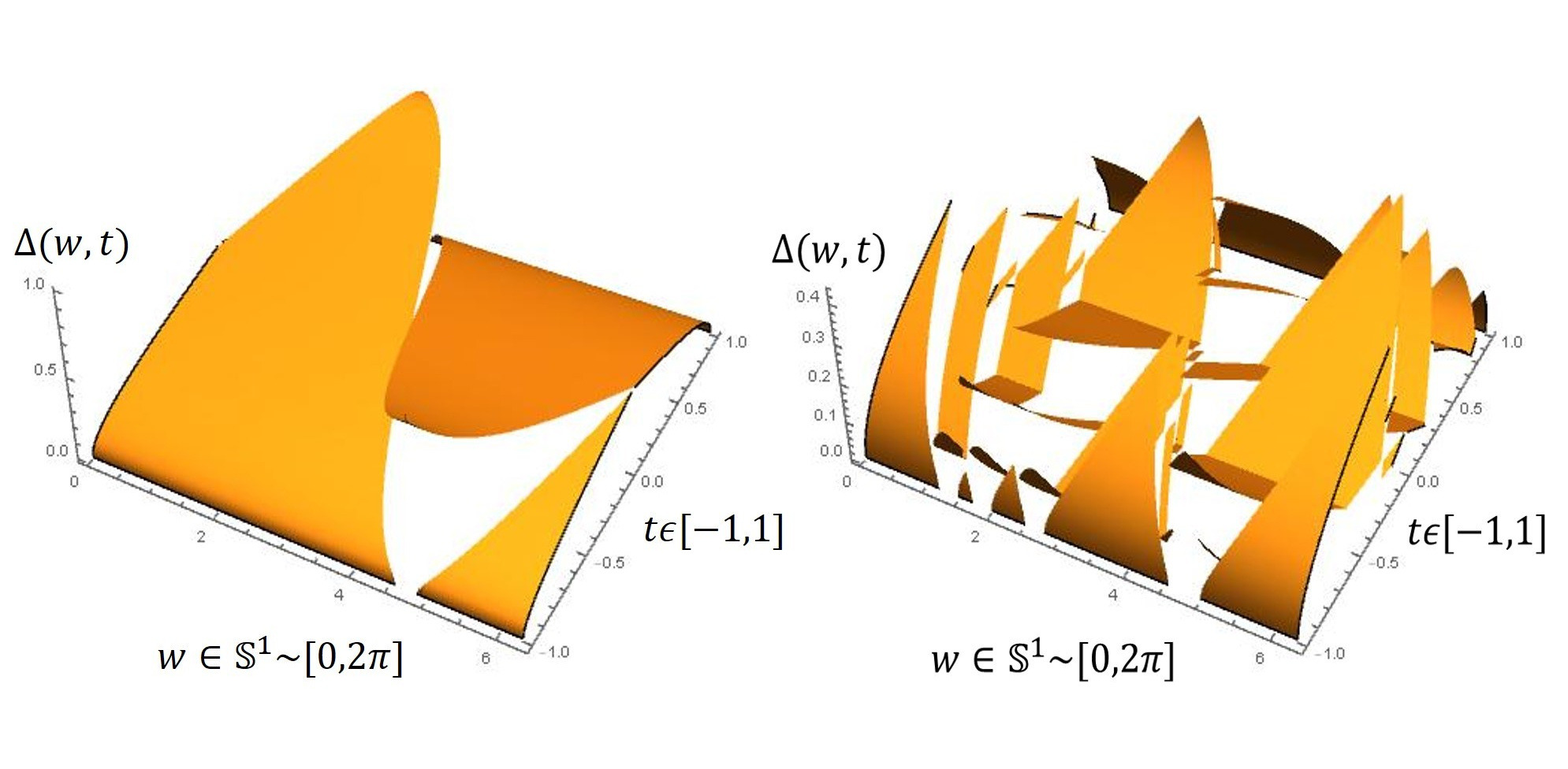}
\caption{\label{discrepplot}Plot of $\Delta (w,t)$ for a sample of size $N=1$ (left) and $N=5$ (right) on $\mathbb{S}^1$.}
\end{figure}

\noindent
The next proposition shows that the discrepancy $\Delta $ is
always realized indeed by a certain cap:

\begin{proposition}
\label{maxsup}There are $w^{\ast }\in \mathbb{S}^{n-1},\,t^{\ast }\in \left[
-1,1\right] $, such that 
\begin{equation*}
\Delta =\left\vert \mu ^{emp}\left( w^{\ast },t^{\ast }\right) -\mu
^{cap}\left( w^{\ast },t^{\ast }\right) \right\vert .
\end{equation*}
\end{proposition}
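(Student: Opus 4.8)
The plan is to exploit compactness of the domain together with the upper semicontinuity of $\mu^{emp}$ established in Lemma \ref{ohs}, reducing the maximization of the absolute value $\left\vert \mu^{emp}-\mu^{cap}\right\vert$ to the maximization of a single upper semicontinuous function. The whole difficulty is that, although $\mu^{cap}$ is continuous and $\mu^{emp}$ is upper semicontinuous, passing to the absolute value destroys semicontinuity: the function $\Delta(w,t)$ is in general neither upper nor lower semicontinuous, so a naive appeal to the Weierstrass theorem is not available.

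First I would note that $\mathbb{S}^{n-1}\times\left[ -1,1\right]$ is compact, being a product of compact sets. Writing $f:=\mu^{emp}-\mu^{cap}$, the function $f$ is upper semicontinuous as the sum of an upper semicontinuous and a continuous function. Using $\left\vert a\right\vert=\max(a,-a)$ and interchanging the pointwise maximum with the supremum, I obtain
\begin{equation*}
\Delta =\max\left( \sup_{w,t} f(w,t),\ \sup_{w,t}\bigl(-f(w,t)\bigr)\right) .
\end{equation*}
The first supremum is attained, since an upper semicontinuous function on a compact set attains its maximum. The problematic term is the second one: $-f$ is only lower semicontinuous, so the same compactness argument does not apply to it directly.

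To dispose of the second supremum I would invoke the symmetry relations (\ref{spiegel}) and (\ref{spiegel2}). Solving (\ref{spiegel2}) for $\mu^{cap}(w,t)$ and (\ref{spiegel}) for $\mu^{emp}(w,t)$ and subtracting yields, for every admissible $(w,t)$,
\begin{equation*}
\mu^{cap}(w,t)-\mu^{emp}(w,t)=f(-w,-t)-N^{-1}\#\{i\mid x^{i}\in H(w,t)\cap H(-w,-t)\}.
\end{equation*}
Since the cardinality term is nonnegative, this gives $-f(w,t)\le f(-w,-t)$. Because the involution $(w,t)\mapsto(-w,-t)$ maps the domain $\mathbb{S}^{n-1}\times[-1,1]$ onto itself, taking the supremum over the whole domain produces $\sup(-f)\le\sup f$, so the maximum above collapses to $\Delta=\sup f$.

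Finally, since $f$ is upper semicontinuous on the compact set $\mathbb{S}^{n-1}\times[-1,1]$, its supremum is attained at some $(w^{\ast},t^{\ast})$, and there $f(w^{\ast},t^{\ast})=\Delta\ge 0$, whence $\left\vert\mu^{emp}(w^{\ast},t^{\ast})-\mu^{cap}(w^{\ast},t^{\ast})\right\vert=\Delta$, which is the assertion. The only genuinely delicate point is recognizing that the absolute value breaks semicontinuity, and that the symmetry relations of Lemma \ref{ohs} are precisely what is needed to trade the troublesome lower semicontinuous branch $\sup(-f)$ for the well-behaved upper semicontinuous branch $\sup f$.
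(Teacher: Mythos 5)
Your proof is correct, and it uses the same ingredients as the paper's proof---compactness of $\mathbb{S}^{n-1}\times[-1,1]$, the upper semicontinuity of $\mu^{emp}$ from Lemma \ref{ohs}, and the reflection identities \eqref{spiegel} and \eqref{spiegel2}---but it organizes them along a genuinely different route. The paper argues sequentially: it takes a maximizing sequence, extracts a convergent subsequence $(w_k,t_k)\to(\bar w,\bar t)$, and distinguishes the two cases $f(w_k,t_k)\to\Delta$ and $-f(w_k,t_k)\to\Delta$, where $f:=\mu^{emp}-\mu^{cap}$; the second case is handled by applying \eqref{spiegel} and \eqref{spiegel2} along the sequence and invoking upper semicontinuity at the reflected limit $(-\bar w,-\bar t)$. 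You instead carry out the reflection argument once, pointwise: $-f(w,t)\le f(-w,-t)$ for every $(w,t)$, hence $\sup(-f)\le\sup f$ because the involution $(w,t)\mapsto(-w,-t)$ maps the domain onto itself, hence $\Delta=\sup f$, and then a single application of the Weierstrass attainment theorem for upper semicontinuous functions on compact sets concludes. Your version dispenses with sequences and the case distinction, and it yields the useful byproduct $\Delta=\sup\left(\mu^{emp}-\mu^{cap}\right)$, i.e., it directly produces a maximizer at which $\mu^{emp}\ge\mu^{cap}$; this anticipates, in existence form, what the paper only obtains later in Corollary \ref{corollary} via Proposition \ref{nonvoid}. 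All the individual steps you rely on---the identity $|a|=\max(a,-a)$, the interchange of supremum with pointwise maximum, and the attainment theorem for upper semicontinuous functions---are standard and correctly applied, so there is no gap.
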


\begin{proof}
Let $\left( w_{k},t_{k}\right) \in \mathbb{S}^{n-1}\times \left[ -1,1\right] 
$ be a sequence realizing the supremum in the definition of $\Delta $: 
\begin{equation}
\left\vert \mu ^{emp}\left( w_{k},t_{k}\right) -\mu ^{cap}\left(
w_{k},t_{k}\right) \right\vert \rightarrow _{k}\Delta  \label{supreal}
\end{equation}%
By the compactness of $\mathbb{S}^{n-1}\times \left[ -1,1\right] $ we may
assume that 
\begin{equation}
\left( w_{k},t_{k}\right) \rightarrow \left( \bar{w},\bar{t}\right) \in 
\mathbb{S}^{n-1}\times \left[ -1,1\right] .  \label{cluster}
\end{equation}%
According to (\ref{supreal}) one may assume one of the following two cases
upon passing to a subsequence: 
\begin{eqnarray}
\mu ^{emp}\left( w_{k},t_{k}\right) -\mu ^{cap}\left( w_{k},t_{k}\right)
&\rightarrow &\Delta  \label{fall1}, \\
\mu ^{cap}\left( w_{k},t_{k}\right) -\mu ^{emp}\left( w_{k},t_{k}\right)
&\rightarrow &\Delta .  \label{fall2}
\end{eqnarray}%
In the case of (\ref{fall1}), the continuity of $\mu ^{cap}$ and the upper
semicontinuity of $\mu ^{emp}$ on $\mathbb{S}^{n-1}\times \left[ -1,1\right] 
$ (see Lemma \ref{ohs}) yield along with (\ref{cluster}) that:%
\begin{eqnarray*}
\Delta &=&\lim_{k\rightarrow \infty }\left( \mu ^{emp}\left(
w_{k},t_{k}\right) -\mu ^{cap}\left( w_{k},t_{k}\right) \right)\\
&=&\limsup_{k\rightarrow \infty }\left( \mu ^{emp}\left( w_{k},t_{k}\right)
-\mu ^{cap}\left( w_{k},t_{k}\right) \right) \\
&\leq &\mu ^{emp}\left( \bar{w},\bar{t}\right) -\mu ^{cap}\left( \bar{w},%
\bar{t}\right) \leq \left\vert \mu ^{emp}\left( \bar{w},\bar{t}\right) -\mu
^{cap}\left( \bar{w},\bar{t}\right) \right\vert \leq \Delta .
\end{eqnarray*}
Hence, $\Delta =\left\vert \mu ^{emp}\left( \bar{w},\bar{t}\right) -
\mu^{cap}\left( \bar{w},\bar{t}\right) \right\vert $. In the case of (\ref%
{fall2}) one may exploit (\ref{spiegel}), (\ref{spiegel2}) and once more the
upper semicontinuity of \,$\mu ^{emp}$\, in order to derive that: 
\begin{eqnarray*}
\Delta &=&\lim_{k\rightarrow \infty }\left( \mu ^{cap}\left(
w_{k},t_{k}\right) -\mu ^{emp}\left( w_{k},t_{k}\right) \right)\\
&=&\lim_{k\rightarrow \infty }\left( 1-\mu ^{emp}\left( w_{k},t_{k}\right)
-(1-\mu ^{cap}\left( w_{k},t_{k}\right) \right) \\
&=&\limsup_{k\rightarrow \infty }\left( 1-\mu ^{emp}\left(
w_{k},t_{k}\right) -(1-\mu ^{cap}\left( w_{k},t_{k}\right) \right) \\
&\leq &\limsup_{k\rightarrow \infty }\left( \mu ^{emp}\left(
-w_{k},-t_{k}\right) -(1-\mu ^{cap}\left( w_{k},t_{k}\right) \right) \\
&\leq &\mu ^{emp}\left( -\bar{w},-\bar{t}\right) -(1-\mu ^{cap}\left( \bar{w}%
,\bar{t}\right) )=\mu ^{emp}\left( -\bar{w},-\bar{t}\right) -\mu
^{cap}\left( -\bar{w},-\bar{t}\right) \\
&\leq &\left\vert \mu ^{emp}\left( -\bar{w},-\bar{t}\right) -\mu
^{cap}\left( -\bar{w},-\bar{t}\right) \right\vert \leq \Delta .
\end{eqnarray*}
Hence, $\Delta =\left\vert \mu ^{emp}\left( -\bar{w},-\bar{t}\right) -\mu
^{cap}\left( -\bar{w},-\bar{t}\right) \right\vert $. Altogether, the
assertion follows with 
\linebreak
$\left( w^{\ast },t^{\ast }\right)
:=\left( \bar{w},\bar{t}\right) $ in the first case and
$\left( w^{\ast },t^{\ast }\right):=\left( -\bar{w},-\bar{t}\right) $
in the second one.  
\end{proof}

We may strengthen the previous Proposition in the sense that not only there exists some cap realizing the discrepancy but
that it also has to contain at least one sample point on its relative boundary:
\begin{proposition}\label{nonvoid}
For $(w^*,t^*)$ realizing the discrepancy in Proposition \ref{maxsup} it holds that there is some $i\in\{1,\ldots,N\}$ such that $\langle w^*,x^i\rangle=t^*$.
\end{proposition}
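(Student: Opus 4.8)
The plan is to hold the direction $w=w^\ast$ fixed and regard the difference of measures as a function of the single scalar variable $t$, writing $g(t):=\mu^{emp}(w^\ast,t)-\mu^{cap}(w^\ast,t)$. The whole argument rests on a contrast in regularity in the $t$-variable: by (\ref{capmeasure}) the map $t\mapsto\mu^{cap}(w^\ast,t)$ is \emph{strictly} decreasing on $(-1,1)$, since the integrand $\sin^{n-2}$ is strictly positive on $(0,\pi)$, whereas $t\mapsto\mu^{emp}(w^\ast,t)$ is a step function that is locally constant away from the finitely many values $\langle w^\ast,x^i\rangle$. Thus, if the bounding hyperplane of the optimal cap met no sample point, $g$ would be \emph{strictly monotone} near $t^\ast$, and one could nudge $t$ so as to increase $|g|$ beyond $\Delta$, contradicting the maximality guaranteed by Proposition \ref{maxsup}.

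Carrying this out, I would argue by contradiction and assume $\langle w^\ast,x^i\rangle\neq t^\ast$ for every $i$. Then some $\varepsilon>0$ isolates $t^\ast$ from all the numbers $\langle w^\ast,x^i\rangle$, so $\mu^{emp}(w^\ast,\cdot)$ is constant on $(t^\ast-\varepsilon,t^\ast+\varepsilon)$ and hence $g$ is strictly increasing there. Since $|g(t^\ast)|=\Delta$ by Proposition \ref{maxsup}, I distinguish the two signs: if $g(t^\ast)=\Delta$ and $t^\ast<1$, then $g(t^\ast+\delta)>\Delta$ for small $\delta>0$; if $g(t^\ast)=-\Delta$ and $t^\ast>-1$, then $g(t^\ast-\delta)<-\Delta$ for small $\delta>0$. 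Either way $|g|$ exceeds $\Delta$ somewhere on $[-1,1]$, which is impossible.

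What remains — and what I expect to be the only delicate point — are the boundary values $t^\ast=\pm1$, where $t$ cannot be moved in the required direction. These are disposed of by evaluating the degenerate caps directly: at $t^\ast=1$ one has $\mu^{cap}(w^\ast,1)=0$ and $\mu^{emp}(w^\ast,1)=N^{-1}\#\{i:\langle w^\ast,x^i\rangle=1\}$, so either this count is positive, which already produces a sample point with $\langle w^\ast,x^i\rangle=1=t^\ast$, or it is zero, giving $|g(t^\ast)|=0$; symmetrically, $t^\ast=-1$ forces $\mu^{cap}=\mu^{emp}=1$ and again $|g(t^\ast)|=0$. To turn the latter alternatives into contradictions I would first note that $\Delta>0$: since $\mu^{cap}(w^\ast,\cdot)$ is continuous with range $[0,1]$, it attains the value $1/(2N)$, whose distance to the lattice $\{0,1/N,\dots,1\}$ in which $\mu^{emp}$ takes its values equals $1/(2N)$, so $\Delta\ge 1/(2N)>0$. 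Hence $|g(t^\ast)|=0$ is incompatible with realizing $\Delta$, and in every case the assumption that no sample point lies on the boundary fails.
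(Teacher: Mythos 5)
Your proof is correct and takes essentially the same route as the paper's: fix $w^*$, note that under the contradiction hypothesis $\mu^{emp}(w^*,\cdot)$ is constant near $t^*$ while $\mu^{cap}(w^*,\cdot)$ is strictly monotone in $t$, and perturb $t$ to exceed the maximal local discrepancy. The only (harmless) divergence is at $t^*=\pm 1$: the paper again perturbs $t$ inward to produce a local discrepancy strictly larger than $\Delta(w^*,t^*)=\Delta$, whereas you dispose of these cases via the a priori bound $\Delta\geq 1/(2N)>0$ obtained from the intermediate value theorem and the lattice of values of $\mu^{emp}$; both arguments are valid.
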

\begin{proof}
Assume that $\langle w^*,x^j\rangle\neq t^*$ for all $j\in\{1,\ldots,N\}$. Then,
\begin{equation}
\mu^{emp}(w^*,t)=\mu^{emp}(w^*,t^*)\label{aux2}
\end{equation}
for $t$ close to $t^*$.
If $|t^*|<1$, then one may strictly increase ($t>t^*$) or decrease ($t<t^*$) $\mu^{cap}(w^*,t)$, so that by virtue of \eqref{aux2} the local discrepancy
$\Delta (w^*,t)$ can be strictly increased in comparison with the maximal one $\Delta (w^*,t^*)=\Delta$. This is a contradiction. If $t^*=1$, then
\begin{eqnarray*}
&\langle w^*,x^j\rangle < 1\quad\forall j\in\{1,\ldots,N\},&\label{aux1}\\
&\mu^{cap}(w^*,t^*) =\mu^{emp}(w^*,t^*)=0.&
\end{eqnarray*}
Since $\mu^{cap}(w^*,t)$ is strictly increased for $t<t^*=1$ while $\mu^{emp}(w^*,t)=0$ for $t$ close to $t^*$ (see \eqref{aux2}),
one may strictly increase the local discrepancy again, so that the same contradiction results. The case $t^*=-1$ follows analogously.  
\end{proof}

An interesting consequence of the previous Proposition is that a cap realizing the discrepancy has always empirical measure not smaller than cap measure:

\begin{corollary}\label{corollary}
For $(w^*,t^*)$ realizing the discrepancy in Proposition \ref{maxsup} it holds that 
\linebreak
$\mu^{emp}(w^*,t^*)\geq \mu^{cap}(w^*,t^*)$.
\end{corollary}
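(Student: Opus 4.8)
The plan is to argue by contradiction via a one-sided perturbation of the threshold $t^*$, playing off the strict monotonicity of $\mu^{cap}(w^*,\cdot)$ against the local constancy of $\mu^{emp}(w^*,\cdot)$ to the left of $t^*$. Suppose, contrary to the claim, that $\mu^{emp}(w^*,t^*)<\mu^{cap}(w^*,t^*)$, so that $\Delta=\mu^{cap}(w^*,t^*)-\mu^{emp}(w^*,t^*)>0$. First I would dispose of the extreme values of $t^*$: if $t^*=-1$ then every sample point satisfies $\langle w^*,x^i\rangle\ge -1$, so $\mu^{emp}(w^*,-1)=1$, while $\mu^{cap}(w^*,-1)=1$ by \eqref{capmeasure}, contradicting the strict inequality; and $t^*=1$ is automatically excluded because there $\mu^{cap}(w^*,1)=0\le\mu^{emp}(w^*,1)$. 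Hence necessarily $t^*>-1$, and a small decrease of the threshold is admissible.

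The core step is the perturbation itself. For $t<t^*$ formula \eqref{capmeasure} shows $\mu^{cap}(w^*,t)>\mu^{cap}(w^*,t^*)$ strictly, since $\arccos$ is strictly decreasing and the integrand $\sin^{n-2}$ is positive on $(0,\pi)$. On the other hand, $\mu^{emp}(w^*,\cdot)$ is unchanged under a sufficiently small decrease of $t$: the sample point $x^i$ with $\langle w^*,x^i\rangle=t^*$ furnished by Proposition \ref{nonvoid} still satisfies $\langle w^*,x^i\rangle\ge t$, and since there are only finitely many sample points one may choose $t$ close enough to $t^*$ from below that no additional point enters the halfspace, giving $\mu^{emp}(w^*,t)=\mu^{emp}(w^*,t^*)$. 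Combining the two, for such $t$ one obtains $\mu^{cap}(w^*,t)-\mu^{emp}(w^*,t)>\mu^{cap}(w^*,t^*)-\mu^{emp}(w^*,t^*)=\Delta$, whence $\Delta(w^*,t)\ge\mu^{cap}(w^*,t)-\mu^{emp}(w^*,t)>\Delta$. This contradicts the fact that $\Delta$ is the supremum of the local discrepancy, and therefore $\mu^{emp}(w^*,t^*)\ge\mu^{cap}(w^*,t^*)$.

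I expect the only delicate point to be the bookkeeping in this perturbation: guaranteeing simultaneously that $\mu^{cap}(w^*,\cdot)$ strictly increases and that $\mu^{emp}(w^*,\cdot)$ stays put for a small enough decrease of $t$, together with verifying that the boundary value $t^*=-1$ cannot occur under the contradiction hypothesis so that decreasing $t$ is in fact possible. Everything else is a direct consequence of the explicit form \eqref{capmeasure} and of the finiteness of the sample. It is worth noting that the boundary point supplied by Proposition \ref{nonvoid} is exactly what makes the one-sided (leftward) perturbation the natural choice: decreasing $t$ keeps that point inside the cap, so the empirical measure does not drop while the cap measure grows.
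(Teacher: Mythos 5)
Your proof is correct, but it takes a genuinely different route from the paper's. The paper argues by reflection: assuming $\mu^{emp}(w^*,t^*)<\mu^{cap}(w^*,t^*)$, it applies the symmetry relations \eqref{spiegel} and \eqref{spiegel2} to the antipodal parameters $(-w^*,-t^*)$, and uses Proposition \ref{nonvoid} (the boundary hyperplane contains a sample point, so $H(w^*,t^*)\cap H(-w^*,-t^*)$ carries empirical mass at least $N^{-1}$) to obtain the strict chain $\Delta=\mu^{cap}(w^*,t^*)-\mu^{emp}(w^*,t^*)<\mu^{emp}(-w^*,-t^*)-\mu^{cap}(-w^*,-t^*)\leq\Delta$, a contradiction. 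You instead perturb the threshold leftward: $\mu^{cap}(w^*,\cdot)$ strictly increases as $t$ decreases (strict monotonicity from \eqref{capmeasure}, valid across both branches since $\sin^{n-2}$ is positive on $(0,\pi)$), while $\mu^{emp}(w^*,\cdot)$ is locally constant by finiteness of the sample, so the local discrepancy would exceed the supremum $\Delta$ --- essentially the same perturbation device the paper uses to prove Proposition \ref{nonvoid} itself, here turned directly onto the Corollary. Two observations on what each approach buys. First, your citation of Proposition \ref{nonvoid} is actually superfluous: every sample point in $H(w^*,t^*)$ remains in $H(w^*,t)$ for $t<t^*$ whether or not it lies on the boundary, so your argument proves the Corollary independently of that Proposition, whereas the paper's proof genuinely needs it to make the inequality strict (so in the paper the Corollary really is a ``consequence of the previous Proposition,'' while in your version it is a standalone fact). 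Second, your endpoint bookkeeping is the right one: the leftward perturbation only requires $t^*>-1$, which you secure by noting $\mu^{emp}(w^*,-1)=\mu^{cap}(w^*,-1)=1$ contradicts the strict hypothesis; the exclusion of $t^*=1$ is harmless but not needed. The paper's symmetry proof is shorter given the preparatory lemmas already in place; yours is more elementary and self-contained, resting only on strict monotonicity of the cap measure in $t$ and the finiteness of the sample.
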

\begin{proof}
Suppose to the contrary, that $\mu^{emp}(w^*,t^*)< \mu^{cap}(w^*,t^*)$. Then, using (\ref{spiegel}) and (\ref{spiegel2}), we arrive at the contradiction
\begin{eqnarray*}
\Delta&=&\mu^{cap}(w^*,t^*)-\mu^{emp}(w^*,t^*)\\
&=&1-\mu^{cap}(-w^*,-t^*)-\\&&(1+N^{-1}\#\{i|x^i\in H(w^*,t^*)\cap H(-w^*,-t^*)\}-\mu^{emp}(-w^*,-t^*))\\
&<&\mu^{emp}(-w^*,-t^*)-\mu^{cap}(-w^*,-t^*)\leq\Delta .
\end{eqnarray*}
Here, the strict inequality follows from the fact that $H(w^*,t^*)\cap H(-w^*,-t^*)$ contains at least one sample point by Proposition \ref{nonvoid}.  
\end{proof}

\begin{lemma}
\label{setopt} Let $\left\{ x^{1},\ldots ,x^{k}\right\} \subseteq \mathbb{S}%
^{n-1}$ for some $k\in \mathbb{N}$ and let 
\begin{equation}
S := \left\{(w,t)\,\left|\,\left\langle w,x^{i}\right\rangle =t\,\,\left(
i=1,\ldots ,k\right)\right.\right\} \label{opt0}
\end{equation}
Let
\begin{equation*}
p:=\mathrm{rank\,}\left\{ \left({ x^{i} }\atop {-1 }\right) \right\}
_{i=1,\ldots ,k}.
\end{equation*}
Then, assuming without loss of generality that 
\begin{equation*}
\mathrm{rank\,}\left\{ \left({ x^{i} }\atop{-1 }\right) \right\}
_{i=1,\ldots ,p}=p
\end{equation*}
(i.e., the first $p$ points $x^i$ are affinely independent), the set $S$ defined in (\ref{opt0}) has a reduced representation 
\begin{equation}
S = \left\{(w,t)\,\left|\,\left\langle w,x^{i}\right\rangle =t\,\,\left(
i=1,\ldots ,p\right)\right.\right\}.
\label{opt01}
\end{equation}
\end{lemma}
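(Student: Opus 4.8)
The plan is to reinterpret $S$ as the solution set of a homogeneous linear system and then invoke the elementary fact that such a solution set depends only on the linear span of the vectors defining it. To this end I would stack the data: set $y:=(w,t)\in\mathbb{R}^{n+1}$ and $a^{i}:=\binom{x^{i}}{-1}\in\mathbb{R}^{n+1}$, so that each defining relation $\langle w,x^{i}\rangle=t$ becomes the single homogeneous equation $\langle y,a^{i}\rangle=0$. Consequently $S$ is exactly the orthogonal complement of $\mathrm{span}\{a^{1},\ldots,a^{k}\}$, whereas the candidate reduced set on the right-hand side of (\ref{opt01}) is the orthogonal complement of $\mathrm{span}\{a^{1},\ldots,a^{p}\}$. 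The whole lemma therefore reduces to showing that these two orthogonal complements coincide, which will follow once the two spans are seen to agree.

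The inclusion of $S$ into the reduced set is immediate, since the reduced set is defined by a subset of the constraints defining $S$. For the reverse inclusion I would argue as follows. By hypothesis $\mathrm{rank}\{a^{i}\}_{i=1,\ldots,k}=p$ and the first $p$ vectors $a^{1},\ldots,a^{p}$ are linearly independent; hence they form a basis of $\mathrm{span}\{a^{1},\ldots,a^{k}\}$. In particular, for every index $j\in\{p+1,\ldots,k\}$ there are scalars $\lambda_{1},\ldots,\lambda_{p}$ with $a^{j}=\sum_{i=1}^{p}\lambda_{i}a^{i}$. Thus, if $y=(w,t)$ satisfies $\langle y,a^{i}\rangle=0$ for $i=1,\ldots,p$, then $\langle y,a^{j}\rangle=\sum_{i=1}^{p}\lambda_{i}\langle y,a^{i}\rangle=0$ for every $j>p$ as well, so that $y\in S$. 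This establishes the reverse inclusion and hence the claimed identity (\ref{opt01}).

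I do not expect any genuine obstacle here: the statement is the standard observation that deleting linearly dependent equations from a homogeneous system leaves its solution set unchanged. The only point requiring a moment's care is the passage from ``the rank equals $p$ and the first $p$ vectors are linearly independent'' to ``the first $p$ vectors span the entire collection $\{a^{1},\ldots,a^{k}\}$'', which is precisely the characterization of a maximal linearly independent subset as a basis of the span. Once this is recorded, the two inclusions above close the argument.
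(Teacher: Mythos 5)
Your proof is correct and takes essentially the same route as the paper's: both reduce the claim to the implication that the first $p$ equations force the remaining ones, by writing each vector $\binom{x^{i}}{-1}$, $i>p$, as a linear combination of the first $p$ and invoking linearity. Your homogenized formulation $\bigl\langle (w,t),\binom{x^{i}}{-1}\bigr\rangle =0$ merely packages the paper's componentwise computation (where the $-1$-component yields $\sum_{j}\lambda_{j}=1$) into a single inner product, which is a cosmetic rather than substantive difference.
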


\begin{proof}
By $p\leq k$ it is sufficient to show that the right-hand side of (\ref%
{opt01}) is contained in $S$ as defined in (\ref{opt0}). It is
therefore enough to show the implication%
\begin{equation}
\left\langle w,x^{j}\right\rangle =t\,\,\left( j=1,\ldots ,p\right)
\Longrightarrow \left\langle w,x^{i}\right\rangle =t\,\,\left( i=p+1,\ldots
,k\right).  \label{impli}
\end{equation}
By definition of $p$, the vectors $\displaystyle \left({x^{i}}\atop {-1}%
\right)$ $\left( i=p+1,\ldots ,k\right) $ are linear combinations of the
vectors $\displaystyle \left({x^{j}}\atop {-1}\right)$ $\left( j=1,\ldots
,p\right) $. Hence, for an arbitrarily fixed \,$i\in\{p+1,\ldots ,k\}$\,
there exists some $\lambda \in \mathbb{R}^{p}$ such that 
\begin{equation*}
\left({ x^{i} }\atop {-1}\right) =\sum_{j=1}^{p}\lambda _{j}\left({
x^{j} }\atop {-1}\right) .
\end{equation*}
Along with the assumption in (\ref{impli}), both components of this last
identity yield that 
\begin{equation*}
\,\left\langle w,x^{i}\right\rangle =\sum_{j=1}^{p}\lambda _{j}\left\langle
w,x^{j}\right\rangle =t\sum_{j=1}^{p}\lambda _{j}=t
\end{equation*}
which is the conclusion of (\ref{impli}).  
\end{proof}

The proof of Proposition \ref{nonvoid} might suggest the idea that a discrepancy realizing cap has to contain not just one but a maximum possible number of sample points on its relative boundary. This intuition is wrong as can be seen from Figure \ref{counter}. Here, any of the two caps in the left picture contains three points on its relative boundary but realizes a strictly smaller local discrepancy $\Delta (w,t)$ than the small cap in the right picture which contains just two of the three sample points on its relative boundary. As a consequence, the evaluation of the discrepancy $\Delta$ cannot be based just on a simple enumeration of local discrepancies $\Delta (w,t)$ associated with affinely independent subsets of the sample points. One has also to consider smaller subsets of sample points for which the hyperplane associated with the cap is not yet fixed. In order to get rid of the remaining degree of freedom, one has to maximize the local discrepancy conditionally to this small subset belonging to the relative boundary of the cap. In the right picture of Figure \ref{counter}, among all caps having $x^2,x^3$ on its boundary, the one defined by a vertical hyperplane turned out to maximize the local discrepancy. The crucial argument in order to incorporate this maximization aspect, is provided in the following result:
\begin{lemma}
\label{crit}Let $\left\{ x^{1},\ldots ,x^{k}\right\} \subseteq \mathbb{S}%
^{n-1}$ be such that
\begin{equation*}
\mathrm{rank\,}\left\{ \left( {x^{i}}\atop {-1}\right) \right\} _{i=1,\ldots
,k}=k.
\end{equation*}
Denote by $X_*$ the matrix whose columns are generated by $x^{i}$ for $%
i=1,\ldots ,k$ and define 
\begin{equation*}
\tilde{X}_*:=\left( {X_*}\atop {-\mathbf{1}^{T}}\right) ;\quad \gamma :=\mathbf{1%
}^{T}\left( \tilde{X}_*^{T}\tilde{X}_*\right) ^{-1}\mathbf{1};\quad \mathbf{1}%
:=(\,1,\ldots ,1)^{T}.
\end{equation*}
Let $\left( w^{\ast },t^{\ast }\right) $ be a local solution of the
 optimization problem
\begin{equation}
\max_{w,t}\left\{ t\,\left\vert \,\left\langle
w,x^{i}\right\rangle =t\,\,\left( i=1,\ldots ,k\right) ,\,\,\left\langle
w,w\right\rangle =1\right. \right\} .  \label{opt1}
\end{equation}
Then, it holds that $0<\gamma \leq 1$. If $\gamma <1$, then 
\[
t^*\in\left\{ \pm \left(\frac{1-\gamma }{\gamma }\right) ^{1/2}\right\},\quad w^*=
\frac{1+(t^*)^2}{t^*}X_*\left( \tilde{X}_*^{T}\tilde{X}_*\right) ^{-1}\mathbf{1}.
\]
Moreover, $\gamma =1$ is equivalent to $t^{\ast }=0$ and we then have $\mathrm{rank\,}X_* = k-1$.
\end{lemma}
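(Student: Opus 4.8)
The plan is to separate the purely algebraic bounds on $\gamma$ from the optimization-based identification of $t^*$ and $w^*$. Throughout I write $u:=(\tilde{X}_*^{T}\tilde{X}_*)^{-1}\mathbf{1}$, so that $\gamma=\mathbf{1}^{T}u$; the rank hypothesis makes $\tilde{X}_*$ of full column rank, hence $\tilde{X}_*^{T}\tilde{X}_*$ is positive definite, $u$ is well defined and $\gamma>0$. The one identity driving the bounds is $\tilde{X}_*^{T}\tilde{X}_*=X_*^{T}X_*+\mathbf{1}\mathbf{1}^{T}$, read off directly from $\tilde{X}_*=\binom{X_*}{-\mathbf{1}^{T}}$. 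I would then compute $\|X_*u\|^{2}=u^{T}X_*^{T}X_*u=u^{T}(\tilde{X}_*^{T}\tilde{X}_*)u-(\mathbf{1}^{T}u)^{2}=\gamma-\gamma^{2}$, using $u^{T}(\tilde{X}_*^{T}\tilde{X}_*)u=\mathbf{1}^{T}u=\gamma$. Since the left-hand side is nonnegative and $\gamma>0$, this forces $\gamma\le 1$, so $0<\gamma\le1$, with no reference to (\ref{opt1}) at all.

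For the remaining assertions I would attach Lagrange multipliers $\lambda\in\mathbb{R}^{k}$ to the linear constraints $X_*^{T}w=t\mathbf{1}$ and $\mu\in\mathbb{R}$ to $\langle w,w\rangle=1$ at the local solution $(w^*,t^*)$. It is worth recording first that these multipliers genuinely exist: the linear system $X_*^{T}w=t\mathbf{1}$ cuts out the subspace $\ker\tilde{X}_*^{T}$, which contains no nonzero vector of the form $\binom{0}{t}$ (such a vector would force $t\mathbf{1}=0$, hence $t=0$), so $\|w\|^{2}$ restricts to a genuine ellipsoid there and the usual regularity holds. Differentiating in $w$ and in $t$ gives $X_*\lambda=-2\mu w^*$ and $\mathbf{1}^{T}\lambda=-1$. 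Then $\tilde{X}_*\lambda=\binom{-2\mu w^*}{1}$, so multiplying by $\tilde{X}_*^{T}$ and inserting $X_*^{T}w^*=t^*\mathbf{1}$ yields $\tilde{X}_*^{T}\tilde{X}_*\lambda=-(2\mu t^*+1)\mathbf{1}$, i.e. $\lambda=-(2\mu t^*+1)u$; combining with $\mathbf{1}^{T}\lambda=-1$ produces the scalar relation $(2\mu t^*+1)\gamma=1$, that is $2\mu t^*=(1-\gamma)/\gamma$.

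A second scalar relation comes from the sphere constraint. When $\mu\neq0$ I would solve $w^*=\tfrac{1}{2\mu\gamma}X_*u$ (from $X_*\lambda=-2\mu w^*$ and $\lambda=-\tfrac1\gamma u$) and evaluate $1=\|w^*\|^{2}=\tfrac{1}{4\mu^{2}\gamma^{2}}\|X_*u\|^{2}=\tfrac{1-\gamma}{4\mu^{2}\gamma}$, using $\|X_*u\|^{2}=\gamma(1-\gamma)$ from the first paragraph. This gives $4\mu^{2}=(1-\gamma)/\gamma$, which is positive, so $\mu\neq0$ is in fact equivalent to $\gamma<1$. Squaring $2\mu t^*=(1-\gamma)/\gamma$ and dividing by $4\mu^{2}=(1-\gamma)/\gamma\neq0$ then yields $(t^*)^{2}=(1-\gamma)/\gamma$, i.e. $t^*=\pm\sqrt{(1-\gamma)/\gamma}$. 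For the vector formula I would only have to rewrite the scalar coefficient: from $2\mu t^*=(1-\gamma)/\gamma$ one gets $\tfrac{1}{2\mu\gamma}=\tfrac{t^*}{1-\gamma}=\tfrac{1}{\gamma t^*}=\tfrac{1+(t^*)^{2}}{t^*}$, the last step because $1/\gamma=1+(t^*)^{2}$; substituting back reproduces $w^*=\tfrac{1+(t^*)^{2}}{t^*}X_*u$.

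It remains to treat $\gamma=1$, which by the previous paragraph is exactly the case $\mu=0$. With $\mu=0$, stationarity gives $X_*\lambda=0$ and $\lambda=-u$, hence $X_*u=0$; since $u\neq0$ this shows $\mathrm{rank}\,X_*\le k-1$, while $\mathrm{rank}\,X_*\ge k-1$ because $\tilde{X}_*$ arises from $X_*$ by appending a single row and has rank $k$, so $\mathrm{rank}\,X_*=k-1$. Moreover $X_*u=0$ together with $\mathbf{1}^{T}u=\gamma=1$ gives $\tilde{X}_*(-u)=\binom{0}{1}$, so $\binom{0}{1}$ lies in the column space of $\tilde{X}_*$; consequently every feasible pair satisfies $t=\binom{0}{1}^{T}\binom{w}{t}=(-u)^{T}\tilde{X}_*^{T}\binom{w}{t}=0$, and in particular $t^*=0$. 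The converse, $t^*=0\Rightarrow\gamma=1$, follows by contraposition from the third paragraph, since $\gamma<1$ forces $t^*\neq0$. I expect the only real care to be needed in keeping the $\mu=0$ (equivalently $\gamma=1$) branch separate, because there the expression for $w^*$ degenerates through division by $t^*=0$ and must instead be replaced by the rank statement.
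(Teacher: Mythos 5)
Your proof is correct, and although both arguments rest on the Lagrange multiplier conditions for problem \eqref{opt1}, your route differs from the paper's in three substantive ways. First, the paper obtains $0<\gamma\leq 1$ only as a consequence of optimality: pairing stationarity with $w^{\ast}$ gives $t^{\ast}=2\mu$, multiplying by $\tilde{X}_*^{T}$ gives $(1+(t^{\ast})^{2})\gamma=1$, and the bound, the equivalence $\gamma=1\Leftrightarrow t^{\ast}=0$, and the formulas for $t^{\ast}$ and $w^{\ast}$ are all read off this one identity. You instead prove $0<\gamma\leq 1$ by pure linear algebra, from $\tilde{X}_*^{T}\tilde{X}_*=X_*^{T}X_*+\mathbf{1}\mathbf{1}^{T}$ and $\|X_*u\|^{2}=\gamma-\gamma^{2}\geq 0$, so the bound is intrinsic to the point set and does not presuppose that problem \eqref{opt1} has a local solution at all. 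Second, where the paper verifies LICQ for the $k+1$ constraint gradients by a direct linear-combination argument, you obtain the multipliers by restricting to $\mathrm{Ker}\,\tilde{X}_*^{T}$, on which $\|w\|^{2}$ is positive definite since that kernel contains no nonzero vector $\binom{0}{t}$; this is equally rigorous (the orthogonal complement of that kernel is exactly $\mathrm{range}\,\tilde{X}_*$, so the reduced multiplier lifts back to $\lambda$), but it replaces computation by a geometric observation. Third, you never isolate the paper's relation $t^{\ast}=2\mu$; instead you play the two scalar relations $2\mu t^{\ast}=(1-\gamma)/\gamma$ and $4\mu^{2}=(1-\gamma)/\gamma$ (the latter from the sphere constraint together with your norm identity) against each other, and in the case $\gamma=1$ you prove the sharper fact that $\tilde{X}_*(-u)=\binom{0}{1}$ forces $t=0$ for \emph{every} feasible pair, not merely $t^{\ast}=0$ at the stationary point. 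The paper's approach buys economy, since one equation carries every conclusion; yours buys generality and structure: $\gamma\in(0,1]$ holds unconditionally, and the degenerate case $\gamma=1$ is exposed as a property of the configuration itself, with $t$ vanishing identically on the feasible set, rather than as a property of the particular optimizer.
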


\begin{proof}
In order to identify $\left( w^{\ast },t^{\ast }\right) $ via necessary
optimality conditions we have first to check if the gradients 
\begin{equation*}
\left\{ \left( {{{x}^{1}}}\atop {{-1}}\right) ,\ldots ,\left( {{{x}^{k}%
}}\atop {{-1}}\right) ,\left( {{2w}}\atop {{0}}\right) \right\}
\end{equation*}%
with respect to $\left( w,t\right) $ of the equality constraints in (\ref%
{opt1}) are linearly independent. We assume a linear combination 
\begin{equation*}
\left( {{0}}\atop {{0}}\right) =\sum_{i=1}^{k}\lambda _{i}\left( {{{\ {%
x}}^{i}}}\atop {{-1}}\right) +\mu \left( {{{\ {2w}}}}\atop {{0}}\right) .
\end{equation*}
Multiplication of the first component with $w$ yields - taking into account
the equality constraints in (\ref{opt1}) and comparing the second component
- that
\begin{equation*}
0=\sum_{i=1}^{k}\lambda _{i}\left\langle w,x^{i}\right\rangle +2\mu
\left\langle w,w\right\rangle =t\sum_{i=1}^{k}\lambda _{i}+2\mu =2\mu .
\end{equation*}
Hence, 
\begin{equation*}
\sum_{i=1}^{k}\lambda _{i}\left( {{{\ {x}}^{i}}}\atop {{-1}}\right) =\left( 
{{0}}\atop {{0}}\right) .
\end{equation*}
By assumption of the Lemma, the vectors $\left\{ \binom{{{\ {x}}^{i}}}{-1}%
\right\} _{i=1,\ldots ,k}$are linearly independent, whence $\lambda _{i}=0$
for $i=1,\ldots ,k$. Furthermore, $\mu =0$, which altogether proves the
linear independence of the gradients of equality constraints in (\ref{opt1}).

This allows us to derive the following necessary optimality
condition for a local solution $\left( w^{\ast },t^{\ast }\right) $ of
problem (\ref{opt1}). Here the gradient of the objective
function $t$ appears on the left-hand side: 
\begin{equation}
\exists \,\lambda _{1},\ldots ,\lambda _{k},\mu \in \mathbb{R}:\left( {%
{0}}\atop {{1}}\right) =\sum_{i=1}^{k}\lambda _{i}\left({x^i}\atop{-1}
\right) +\mu \left( {{2}w^{\ast }}\atop{{0}}\right) .  \label{necopt}
\end{equation}
The second component implies that $\sum_{i=1}^{k}\lambda _{i}=-1$.
Multiplication of the first component by $w^{\ast }$ and exploiting the
equality constraints in (\ref{opt1}) yields that 
\begin{equation*}
0=\sum_{i=1}^{k}\lambda _{i}\left\langle w^{\ast },x^{i}\right\rangle +2\mu
\left\langle w^{\ast },w^{\ast }\right\rangle =t^{\ast
}\sum_{i=1}^{k}\lambda _{i}+2\mu ,
\end{equation*}
in particular, $t^{\ast }=2\mu $. With $\lambda :=(\,\lambda _{1},\ldots
,\lambda _{k})^{T},$ the equation in \eqref{necopt} reads as 
\begin{equation}
\left( {0}\atop {1}\right) =\tilde{X}_*\lambda +t^{\ast}\left( {w^{\ast }}\atop {0}%
\right) .  \label{necopt2}
\end{equation}
Multiplication of both sides from the left by $\tilde{X}_*^{T}$ and using the
first feasibility constraint $X_*^{T}w^{\ast }=t^{\ast }\mathbf{1}$ in (\ref%
{opt1}) results in 
\begin{equation*}
-\mathbf{1}=\tilde{X}_*^{T}\tilde{X}_*\lambda +t^{\ast 2}\mathbf{1}.
\end{equation*}
By the assumption of this Lemma, the matrix $\tilde{X}_*^{T}\tilde{X}_*$ is
regular and we can solve the last equation for $\lambda $: 
\begin{equation}
\lambda =-(1+t^{\ast 2})\left( \tilde{X}_*^{T}\tilde{X}_*\right) ^{-1}\mathbf{1}.
\label{necopt3}
\end{equation}
Recalling that $\mathbf{1}^{T}\lambda =-1$ we arrive at 
\begin{equation}
(1+t^{\ast 2})\mathbf{1}^{T}\left( \tilde{X}_*^{T}\tilde{X}_*\right) ^{-1}%
\mathbf{1}=1\,.  \label{necopt4}
\end{equation}
By definition of $\gamma $, the latter equation implies that we necessarily
have $0<\gamma \leq 1$, and $\gamma =1$ if and only if $t^{\ast }=0$. 
In the case $\gamma = 1$ we have that $w^{\ast }\in \mathrm{Ker\,}X_*^{T}\cap \mathbb{S}^{n-1}$
by feasibility of $w^*$ in (\ref{opt1}). Moreover,
from (\ref{necopt2}) we see that $X_*\lambda = 0$ for some $\lambda\neq 0$ which then implies 
that $\mathrm{rank\,}X_* = k-1$ due to 
\begin{equation*}
k - 1 = \mathrm{rank\,} \tilde{X}_* - 1 \leq \mathrm{rank\,} X_* = k - \mathrm{dim\,} \mathrm{Ker\,} X_* \leq k - 1.
\end{equation*}
If, in contrast, $0<\gamma <1$, then, by (\ref{necopt4}),
\begin{equation}
t^{\ast }=\pm \left( \frac{1-\gamma }{\gamma }\right) ^{1/2}.
\label{necopt5}
\end{equation}
The first component of \eqref{necopt2} reads 
\begin{equation*}
0=X_*\lambda +t^{\ast }w^{\ast }.
\end{equation*}
Using the representation \eqref{necopt3} for $\lambda $ we obtain that 
\begin{equation*}
w^{\ast }=\frac{1+t^{\ast 2}}{t^{\ast }}X_*\left( \tilde{X}_*^{T}\tilde{X}_*
\right) ^{-1}\mathbf{1}
\end{equation*}
which completes the proof.
\end{proof}

Note, the fact that $(w^{\ast},t^{\ast})$ is a local maximum in \eqref{opt1} does not imply $t^{\ast}\geq 0$ in case of $k=n$. 
We proceed with the following purely technical Lemma which will be needed to cope with a degenerate subcase in our main result later on. 

\begin{lemma}\label{rankselect}
Let $\left\{ x^{1},\ldots ,x^{N}\right\} \subseteq \mathbb{S}^{n-1}$. For
any $I\subseteq \{1,\ldots ,N\}$ let $X_{I}$ be the matrix whose columns are 
$x^{i}$, $i\in I$. Define $\tilde{X}_{I}:=\left( {X_{I}}\atop {-\mathbf{1%
}^{T}}\right)$ and let be $\tilde{X} := \tilde{X}_{\{1,\ldots,N\}}$.
Let $w_0\in \mathbb{S}^{n-1}$ be given such that
\[
I_0 := \left\{ i\in\{1,\ldots,N\}\,|\, \langle w_0,x^i\rangle = 0\right\} \neq \emptyset
\]
and such that it holds
\begin{eqnarray}
    &w_0 \in \underset{w\in \mathrm{Ker\,}X_{I_0}^{T}\cap \mathbb{S}^{n-1}}{\arg \max }\mu ^{emp}(w,0)\,,& \label{A1}\\
&\mathrm{rank\,}\tilde{X}_{{I}_0}<\min \big\{ n,\mathrm{rank\,}\tilde{X}\big\}\,,
                 \quad \mathrm{rank\,}{X}_{{I}_0} = \mathrm{rank\,}\tilde{X}_{{I}_0} - 1\,.& \label{A2}
\end{eqnarray}
Then there exist $w_1\in\mathbb{S}^{n-1}$ and $I_1$ with $I_0\subseteq I_1\subseteq\{1,\ldots,N\}$
such that
\begin{eqnarray}
& w_1 \in \mathrm{Ker\,} X_{I_1}^{T}\cap\mathbb{S}^{n-1}, \quad \mu^{emp}(w_1,0)=\mu^{emp}(w_0,0),& \label{S1}\\
&\mathrm{rank\,}X_{I_1} = \mathrm{rank\,}{X}_{I_0} + z,\quad
 \mathrm{rank\,}\tilde{X}_{I_1} = \mathrm{rank\,}\tilde{X}_{I_0} + z & \label{S2}
\end{eqnarray}
for some natural number $z\geq 1$.
\end{lemma}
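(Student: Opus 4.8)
The plan is to produce $w_1$ by rotating $w_0$ within the subspace $L:=\mathrm{Ker\,}X_{I_0}^{T}=\left(\mathrm{span}\{x^i\}_{i\in I_0}\right)^{\perp}$ until the boundary hyperplane sweeps onto at least one additional sample point, and then to let $I_1$ consist of $I_0$ together with the newly met indices. First I would record two consequences of the hypotheses. Combining the two statements in \eqref{A2} gives $\mathrm{rank\,}X_{I_0}=\mathrm{rank\,}\tilde X_{I_0}-1\le n-2$, whence $\dim L=n-\mathrm{rank\,}X_{I_0}\ge2$ and $L\cap\mathbb{S}^{n-1}$ is a connected sphere of positive dimension along which $w_0$ can be moved. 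Moreover $\mathrm{rank\,}\tilde X_{I_0}<\mathrm{rank\,}\tilde X$ forces $I_0\subsetneq\{1,\ldots,N\}$, and any $j\notin I_0$ satisfies $x^j\notin\mathrm{span}\{x^i\}_{i\in I_0}$ (otherwise $w_0\in L$ would give $\langle w_0,x^j\rangle=0$ and $j\in I_0$); in particular $w\mapsto\langle w,x^j\rangle$ does not vanish identically on $L$.

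Next I would fix a unit vector $d\in L$ orthogonal to $w_0$ (available since $\dim L\ge2$) and travel along $w(s):=\cos(s)\,w_0+\sin(s)\,d\in L\cap\mathbb{S}^{n-1}$. Since $w(\pi)=-w_0$, for each $j\notin I_0$ the quantity $\langle w(s),x^j\rangle$ flips sign between $s=0$ and $s=\pi$, so the first parameter $s_1>0$ at which some $\langle w(s),x^j\rangle$, $j\notin I_0$, vanishes is well defined. I then set $w_1:=w(s_1)$, $J:=\{j\notin I_0:\langle w_1,x^j\rangle=0\}\neq\emptyset$ and $I_1:=I_0\cup J$. As $w(s)\in L$ keeps $\langle w(s),x^i\rangle=0$ for every $i\in I_0$, the vector $w_1$ lies in $\mathrm{Ker\,}X_{I_1}^{T}\cap\mathbb{S}^{n-1}$.

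To obtain the measure equality in \eqref{S1} I would invoke the maximality \eqref{A1}. Writing $P:=\{i:\langle w_0,x^i\rangle>0\}$ and $M:=\{i:\langle w_0,x^i\rangle<0\}$, no functional $\langle w(s),x^j\rangle$ with $j\notin I_0$ vanishes on $(0,s_1)$, so the signs of the $P$- and $M$-indices are frozen there and $\mu^{emp}(w(s),0)=N^{-1}(|I_0|+|P|)$. At $s_1$ the indices of $J$ land on the boundary: those from $P$ stay counted, whereas any from $M$ would be counted for the first time, giving $\mu^{emp}(w_1,0)=N^{-1}(|I_0|+|P|+|J\cap M|)$. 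Since $w_1\in L\cap\mathbb{S}^{n-1}$, maximality forces this to be $\le\mu^{emp}(w_0,0)=N^{-1}(|I_0|+|P|)$, hence $J\cap M=\emptyset$ and $\mu^{emp}(w_1,0)=\mu^{emp}(w_0,0)$.

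Finally, for \eqref{S2} I would show that the rank defect of \eqref{A2} is inherited by $I_1$. The defect yields $\lambda$ with $X_{I_0}\lambda=0$ and $\mathbf{1}^{T}\lambda\neq0$ (because $\mathrm{Ker\,}X_{I_0}\supsetneq\mathrm{Ker\,}\tilde X_{I_0}$); padding $\lambda$ by zeros over $J$ produces $\lambda'$ with $X_{I_1}\lambda'=0$ and $\mathbf{1}^{T}\lambda'\neq0$, so that $\mathrm{rank\,}X_{I_1}=\mathrm{rank\,}\tilde X_{I_1}-1$ as well. Subtracting the relation $\mathrm{rank\,}X_{I}=\mathrm{rank\,}\tilde X_{I}-1$ for $I=I_0$ from the one for $I=I_1$ shows that the two increments in \eqref{S2} are equal to a common value $z:=\mathrm{rank\,}X_{I_1}-\mathrm{rank\,}X_{I_0}$, and $z\ge1$ because each $j\in J\neq\emptyset$ lies outside $\mathrm{span}\{x^i\}_{i\in I_0}$ and thus strictly raises $\mathrm{rank\,}X$. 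I expect the main obstacle to be exactly this coupling: one must use \eqref{A1} so that the captured points come from the interior half-space $P$ (which preserves $\mu^{emp}$), while simultaneously checking via the zero-padding that passing from $I_0$ to $I_1$ increases $\mathrm{rank\,}X_{I}$ and $\mathrm{rank\,}\tilde X_{I}$ by the very same amount.
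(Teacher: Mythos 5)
Your proof is correct, but it takes a genuinely different route from the paper's. Where you construct $w_1$ by rotating $w_0$ along a great circle $w(s)=\cos(s)\,w_0+\sin(s)\,d$ inside $\mathrm{Ker\,}X_{I_0}^{T}$ and stopping at the first contact $s_1$ (intermediate value theorem plus finiteness of the sample), the paper obtains $w_1$ as a solution of the auxiliary optimization problem $\min\{\varphi(w)\,|\,w\in\mathrm{Ker\,}X_{I_0}^{T}\cap\mathbb{S}^{n-1},\,\varphi(w)\geq 0\}$ with $\varphi(w)=\min_{j\in J_0}\langle w,x^j\rangle$ taken over the positive index set $J_0$ (your $P$), and then shows $\varphi(w_1)=0$ by a perturbation argument that moves $w_1$ toward some $\bar{w}\in\mathrm{Ker\,}X_{K}^{T}\cap\mathbb{S}^{n-1}$ and renormalizes; the existence of $\bar{w}$ rests on the same dimension count from \eqref{A2} that gives you $\dim\mathrm{Ker\,}X_{I_0}^{T}\geq 2$. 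The treatment of \eqref{S1} differs accordingly: the paper's feasibility constraint $\varphi\geq 0$ prevents positive points from ever crossing the hyperplane, so $\mu^{emp}(w_1,0)\geq\mu^{emp}(w_0,0)$ is immediate and \eqref{A1} closes the gap, whereas you freeze all signs up to first contact and use \eqref{A1} to rule out that any point from the negative side $M$ lands on the boundary. A nice byproduct of your version is that the paper's separate preliminary step --- proving $J_0\neq\emptyset$ by contradiction --- becomes unnecessary, since $J\cap M=\emptyset$ together with $J\neq\emptyset$ already forces $P\neq\emptyset$. For \eqref{S2} the arguments also diverge: the paper telescopes the one-column-at-a-time inequality $\mathrm{rank\,}\tilde{X}_{K_\ell}-\mathrm{rank\,}\tilde{X}_{K_{\ell-1}}\geq\mathrm{rank\,}X_{K_\ell}-\mathrm{rank\,}X_{K_{\ell-1}}$ and invokes \eqref{A2} for the reverse estimate, whereas you observe that the rank defect $\mathrm{rank\,}X_{I}=\mathrm{rank\,}\tilde{X}_{I}-1$, witnessed by a kernel vector $\lambda$ with $X_{I_0}\lambda=0$ and $\mathbf{1}^{T}\lambda\neq 0$, survives zero-padding from $I_0$ to $I_1$, so the two rank increments must coincide; this is a cleaner piece of linear algebra than the chain argument. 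Overall your route is more elementary and somewhat shorter; what the paper's optimization formulation buys is that it avoids committing to a particular rotation direction $d$, at the price of the delicate normalization of $\bar{w}_t=t\bar{w}+(1-t)w_1$.
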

\begin{proof}
We claim that assumptions (\ref{A1}) and (\ref{A2}) imply that the index set
\[
J_0:=\left\{j\in\{1,\ldots,N\}\,|\,\langle w_0, x^j \rangle > 0 \right\}
\]
is nonempty. Indeed, in case that $J_0=\emptyset $, we would
have that $\mu ^{emp}(w_0,0)=N^{-1}\#I_0$ and that
$\left\langle w_0,x^{i}\right\rangle \leq 0$ for all $i\in \left\{
1,\ldots ,N\right\} $, which amounts to $\mu ^{emp}(-w_0,0)=1$. Then,
since $-w_0\in \mathrm{Ker\,}X_{I_0}^{T}\cap \mathbb{S}^{n-1}$, it would follow that
\begin{equation*}
1=\mu^{emp}(-w_0,0)\leq \mu^{emp}(w_0,0)=N^{-1}\#I_0\leq 1.
\end{equation*}%
Consequently, $N=\#I_0$, hence $\tilde{X}=\tilde{X}_{I_0}$ and we arrive at the contradiction
\begin{equation*}
\mathrm{rank\,}\tilde{X}=\mathrm{rank\,}\tilde{X}_{I_0}<
\min \big\{ n,\mathrm{rank\,}\tilde{X}\big\} \leq \mathrm{rank\,}\tilde{X}.
\end{equation*}%
Therefore, $J_0\neq \emptyset $.

From the assumption $w_0 \in \mathrm{Ker\,} X_{I_0}^T \cap \mathbb{S}^{n-1}$
and from the definitions of $I_0$, $J_0$ we observe that
\begin{equation} \label{esta}
 \mu^{emp} (w_0,0) = N^{-1}\,(\# I_0 + \# J_0). 
\end{equation}
In order to show the existence of some suitable $w_1$
let us consider the following optimization problem:%
\begin{equation}
\min_{w}\left\{ \varphi (w)\,\left\vert \,w\in \mathrm{Ker\,}X_{I_0}^{T}\cap
\mathbb{S}^{n-1},\,\varphi (w)\geq 0\right. \right\} ,\quad
\varphi (w):=\min_{j\in J_0}\langle w,x^{j}\rangle .  \label{optimprob}\end{equation}%
Observe that the feasible set of this problem is nonempty (it contains $%
w_0$) and compact by continuity of $\varphi $. Hence, once more by continuity of $\varphi$, the problem
admits a solution ${w_1}$. Select $j_1\in J_0$ satisfying $%
\langle {w_1},x^{j_1}\rangle =\varphi ({w_1})$ and put ${K}%
:=I_0\cup \left\{ j_1 \right\} $.

Next, we prove that $\varphi (w_1)=0$. Assume to the contrary that $%
\varphi ({w_1})>0$. 
Because $x^{j_1}\notin \mathrm{span}\,\left\{x^{i}\right\}_{i\in I_0}$
by $w_1\in \mathrm{Ker\,}X_{I_0}^{T}$ and $\langle {w_1},x^{j_1}\rangle>0$,
we observe that
\begin{equation}\label{rank1}
   \mathrm{rank\,}X_{K}=\mathrm{rank\,}X_{I_0} + 1.
\end{equation}
Assumption (\ref{A2}) and property (\ref{rank1}) imply that
\begin{equation*}
\dim \mathrm{Ker\,}X_{K}^{T} = n-\mathrm{rank\,} X_{K} = n - \mathrm{rank\,} X_{I_0} -1 > 0,
\end{equation*}%
whence $\mathrm{Ker\,}X_{K}^{T}\cap \mathbb{S}^{n-1}\neq \emptyset $.
Select some $\bar{w}\in \mathrm{Ker\,}X_{K}^{T}\cap \mathbb{S}^{n-1}$
moreover satisfying $\langle {w_1},\bar{w}\rangle \geq 0$ and define%
\begin{equation*}
\bar{w}_{t}:=t\bar{w}+\left( 1-t\right) {w_1}\quad \forall t\in \left[ 0,1\right].
\end{equation*}%
Then, with $\|\cdot\|$ referring to the Euclidean norm, we derive that
\begin{equation}\label{strictpos}
\left\Vert
\bar{w}_{t}\right\Vert >1-t>0\quad\forall t\in (0,1).
\end{equation} 
In particular, recalling that
${w_1}\in \mathrm{Ker\,}X_{I_0}^{T}\cap \mathbb{S}^{n-1}$ is a solution of
(\ref{optimprob}) and that 
\begin{equation*}
\bar{w}\in \mathrm{Ker\,}X_{K}^{T}\cap \mathbb{S}^{n-1}\subseteq 
\mathrm{Ker\,}X_{I_0}^{T}\cap \mathbb{S}^{n-1},
\end{equation*}%
we may define 
\begin{equation*}
\tilde{w}_{t}:=\bar{w}_{t}/\left\Vert \bar{w}_{t}\right\Vert \in \mathrm{Ker\,}%
X_{I_0}^{T}\cap \mathbb{S}^{n-1}\quad \forall t\in (0,1).
\end{equation*}%
Now, since $\lim\limits_{t\downarrow 0}\|\bar{w}_{t}\|=\|w_1\|=1$, we infer that for all
$j\in J_0$, 
\begin{equation*}
\lim\limits_{t\downarrow 0}\left\langle \tilde{w}_{t},x^{j}\right\rangle
=\lim\limits_{t\downarrow 0}\left( t\left\langle \bar{w},x^{j}\right\rangle
+\left( 1-t\right) \left\langle {w_1},x^{j}\right\rangle \right)
/\left\Vert \bar{w}_{t}\right\Vert =\left\langle {w_1},x^{j}\right\rangle \geq
\varphi ({w_1})>0.
\end{equation*}%
Consequently, $\varphi (\tilde{w}_{t})\geq 0$ for small enough $t>0$ which
entails that $\tilde{w}_{t}$ is feasible in problem (\ref{optimprob}) forsmall enough $t>0$. On the other hand, since $\bar{w}\in \mathrm{Ker\,}X_{{K}}^{T}$, we may exploit the relation
$\left\langle \bar{w},x^{j_1}\right\rangle =0$, in order to derive from \eqref{strictpos}  and $\varphi(w_1)>0$ that 
\begin{eqnarray*}
\varphi (\tilde{w}_{t}) &\leq &\left\langle \tilde{w}_{t},x^{j_1}\right\rangle
=\left( t\left\langle \bar{w},x^{j_1}\right\rangle
+\left( 1-t\right) \left\langle {w_1},x^{j_1}\right\rangle \right)
/\left\Vert \bar{w}_{t}\right\Vert \\
&=&\left( 1-t\right)\varphi(w_1)
/\left\Vert \bar{w}_{t}\right\Vert <\varphi ({w_1})
\end{eqnarray*}%
for all $t\in (0,1)$, whence the contradiction that for small enough $t>0$
$\tilde{w}_{t}$ is feasible in problem (\ref{optimprob}) and realizes a strictly smaller objective value than the solution $w_1$. Hence, we have shown that
$\varphi(w_1) = 0$.

From $\langle {w_1},x^{j_1}\rangle = 0$ it follows that
${w_1}\in \mathrm{Ker\,}X_{K}^{T}\cap \mathbb{S}^{n-1}$.
Put 
\[
I_1:=\left\{\left.i\in\{1,\ldots,N\}\,\right|\langle w_1,x^i \rangle = 0\right\}
\]
and obtain that 
\begin{equation}\label {properinclu}
I_0\subset K \subseteq I_1.
\end{equation}
Since $\mathrm{Ker\,}X_K^{T}\subseteq\mathrm{Ker\,}X_{I_0}^{T}$, the relation%
\begin{equation*}
\langle {w_1},x^{j}\rangle \geq \varphi ({w_1})=0 \quad \forall j\in J_0
\end{equation*}%
implies together with equation (\ref{esta}) and assumption (\ref{A1}) that 
\begin{equation*}
\,\mu ^{emp}({w_1},0)\geq N^{-1}\left( \#I_0+\#J_0\right) = \mu
^{emp}(w_0,0)\geq \mu ^{emp}(w_1,0).
\end{equation*}%
Hence, $\mu ^{emp}({w_1},0)=\mu ^{emp}(w_0,0)$. This, along with the definition of $I_1$ shows the two relations claimed in \eqref{S1}. 

In order to verify \eqref{S2}, let finally $I_1 \setminus I_0 = \{k_1,\ldots,k_s\}$ and put $K_0:=I_0$, $K_\ell := I_0 \cup \{k_1,\ldots,k_\ell\}$ for $\ell=1,\ldots, s$.
Obviously, 
\[
 \mathrm{rank\,} \tilde{X}_{K_{\ell}} - \mathrm{rank\,} \tilde{X}_{K_{\ell-1}} \geq \mathrm{rank\,} {X}_{K_{\ell}} - \mathrm{rank\,} {X}_{K_{\ell-1}}
\]
for all $\ell = 1,\ldots,s$, whence
\begin{equation}
 \mathrm{rank\,} \tilde{X}_{I_1} - \mathrm{rank\,} \tilde{X}_{I_0}
 \geq\mathrm{rank\,} {X}_{I_1} - \mathrm{rank\,} {X}_{I_0}=:z \,. \label{z1}
\end{equation}
By  (\ref{rank1}) and (\ref{properinclu}), we have that $ z \geq 1$.
On the other hand, assumption (\ref{A2}) implies  that
\begin{eqnarray}
    \mathrm{rank\,} \tilde{X}_{I_1} - \mathrm{rank\,} \tilde{X}_{I_0} &\leq& \mathrm{rank\,} {X}_{I_1} + 1  - \mathrm{rank\,} \tilde{X}_{I_0} \nonumber\\
    &=& z + \mathrm{rank\,} {X}_{I_0} + 1  - (\mathrm{rank\,} {X}_{I_0} + 1) \,=\, z\,. \label{z2}
\end{eqnarray}
Estimations (\ref{z1}) and (\ref{z2}) show the relations claimed in (\ref{S2}) and we are done.  
\end{proof}

%\bigskip
%\noindent
We finish this section by a simple implication which will be needed several times in the proof of the main result below and which uses the notation introduced in Lemma \ref{rankselect}:
\begin{eqnarray}\label{implichain}
\mathrm{Ker}\,X^T_I\cap\mathbb{S}^{n-1}\neq\emptyset&\Longrightarrow&\mathrm{dim}\,\mathrm{Ker}\,X^T_I\geq 1 \nonumber\\
&\Longrightarrow& \mathrm{rank}\,\tilde{X}_I\leq\mathrm{rank}\,X_I+1\leq n\nonumber\\
&\Longrightarrow& \mathrm{rank}\,\tilde{X}_I\leq \min\{n,\mathrm{rank}\,\tilde{X}\}.
\end{eqnarray}

\section{Main Result}

\noindent
After the preparations of the previous section, we are in a
position to derive a formula allowing for the computation of the cap
discrepancy $\Delta$ of any sample on the sphere by enumeration of finitely many easy to calculate local discrepancies $\Delta (w,t)$.
The Theorem divides into a simpler part for the case that the half space realizing the discrepancy does not contain the origin on its boundary (i.e., $t^*\neq 0$ for the couple $(w^*,t^*)$ in Proposition \ref{maxsup}) and a technically more delicate part in case that the origin does belong to that boundary (i.e., $t^*=0$).
\begin{theorem}\label{maintheo}
Let $\left\{ x^{1},\ldots ,x^{N}\right\} \subseteq \mathbb{S}^{n-1}$. For
any $I\subseteq \{1,\ldots ,N\}$ with $I\neq\emptyset$, let $X_{I}$ be the matrix whose columns are 
$x^{i}$ ($i\in I$) and define $\tilde{X}_{I}:=\left( {X_{I}}\atop {-\mathbf{1%
}^{T}}\right) $ as well as $\tilde{X}:=\tilde{X}_{\{1,\ldots ,N\}}$. Consider the following finite
families of index sets:%
\begin{eqnarray*}
\Phi _{1} &:&=\left\{I\subseteq \{1,\ldots ,N\}\,\left|\,1\leq\mathrm{rank\,}\tilde{X}%
_{I}=\#I\leq\min\big\{n,\mathrm{rank\,}\tilde{X}\big\};\,\gamma _{I}<1\right.\right\}, \\
\Phi _{0} &:&=\left\{I\subseteq \{1,\ldots ,N\}\,\left|\,1\leq\mathrm{rank\,}\tilde{X}%
_{I}=\#I=\min\big\{n,\mathrm{rank\,}\tilde{X}\big\};\,%
\gamma _{I}=1 \right.\right\},
%\Phi _{-1} &:&=\{I\subseteq \{1,\ldots ,N|\,\mathrm{rank\,}X_{I}=\#I=\mathrm{rank\,}X\}
\end{eqnarray*}%
where $\gamma _{I}:=\mathbf{1}^{T}\left( \tilde{X}_{I}^{T}\tilde{X}%
_{I}\right) ^{-1}\mathbf{1}$. For $I\in \Phi _{1}\cup \Phi _{0}$ put 
\begin{equation*}
t_{I}:=\left\{ 
\begin{tabular}{cl}
$\left( \frac{1-\gamma _{I}}{\gamma _{I}}\right) ^{1/2}$ & $I\in \Phi _{1}$
\\ 
$0$ & $I\in \Phi _{0}$%
\end{tabular}%
\right. ,\mathbf{\quad }w_{I}:=\left\{ 
\begin{tabular}{cl}
$\frac{1+{t_{I}}^{2}}{t_{I}}X_{I}\left( \tilde{X}_{I}^{T}\tilde{X}%
_{I}\right) ^{-1}\mathbf{1}$ & $I\in \Phi _{1}$ \\ 
$\in \mathrm{Ker\,}X_{I}^{T}\cap \mathbb{S}^{n-1}$ & $I\in \Phi _{0}$%
\end{tabular}%
\right. ,
\end{equation*}%
where the selection of $w_{I}$ in case of $I\in \Phi _{0}$ is arbitrary.
Then, for the cap discrepancy it holds that $\Delta =\max \left\{ \Delta
_{1},\Delta _{0}\right\} $, where 
\begin{eqnarray*}
\Delta _{1}:= &&\left\{ 
\begin{tabular}{cl}
$\max\limits_{I\in \Phi _{1}}\max \left\{ \Delta (w_{I},t_{I}),\Delta
(-w_{I},-t_{I})\right\} $ & if $\Phi _{1}\neq \emptyset $ \\ 
$0$ & else%
\end{tabular}%
\right. ,\\
\Delta _{0}:= &&\left\{ 
\begin{tabular}{cl}
$\max\limits_{I\in \Phi _{0}}\max \left\{ \Delta (w_{I},0),\Delta
(-w_{I},0)\right\} $ & if $\Phi _{0}\neq \emptyset $ \\ 
$0$ & else%
\end{tabular}%
\right..
\end{eqnarray*}
\end{theorem}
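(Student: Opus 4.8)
The plan is to establish the identity $\Delta=\max\{\Delta_1,\Delta_0\}$ by two opposite inequalities, treating the easy direction $\max\{\Delta_1,\Delta_0\}\le\Delta$ first. For this I would check that every couple entering the definitions is an admissible cap. For $I\in\Phi_0$ this is built into the definition, since $w_I\in\mathrm{Ker}\,X_I^T\cap\mathbb S^{n-1}$ is a unit vector and $t=0\in[-1,1]$. For $I\in\Phi_1$ I would use the identity $\tilde X_I^T\tilde X_I=X_I^TX_I+\mathbf 1\mathbf 1^T$ together with the definition of $\gamma_I$: a short computation with $u:=(\tilde X_I^T\tilde X_I)^{-1}\mathbf 1$ gives $u^TX_I^TX_Iu=\gamma_I(1-\gamma_I)$ and $X_I^Tw_I=t_I\mathbf 1$, whence $\|w_I\|=1$ and, by Cauchy--Schwarz applied to $t_I=\langle w_I,x^i\rangle$ with $x^i\in\mathbb S^{n-1}$, also $t_I\in(0,1]$. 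Once admissibility is secured, each of $\Delta(w_I,t_I),\Delta(-w_I,-t_I)$ is bounded by the supremum $\Delta$, so $\max\{\Delta_1,\Delta_0\}\le\Delta$.

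The substance is the reverse inequality. I would start from a realizing couple $(w^*,t^*)$ from Proposition \ref{maxsup}, drop the modulus via Corollary \ref{corollary} so that $\Delta=\mu^{emp}(w^*,t^*)-\mu^{cap}(w^*,t^*)$, and record the nonempty contact set $I^*:=\{i\mid\langle w^*,x^i\rangle=t^*\}$ from Proposition \ref{nonvoid}. The crucial structural observation is that $(w^*,t^*)$ is a local maximizer of $t$ on the feasible set of problem \eqref{opt1} associated with $I^*$: along that set $\mu^{emp}$ is locally constant (points off $I^*$ keep their strict sign by continuity, while contact points stay on the boundary), whereas $\mu^{cap}$ depends on $t$ alone and is strictly decreasing by \eqref{capmeasure}, so any local increase of $t$ would strictly enlarge $\Delta$. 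Using Lemma \ref{setopt} I would replace $I^*$ by an affinely independent subset $I$ with the same feasible set and $\mathrm{rank}\,\tilde X_I=\#I$, then apply Lemma \ref{crit}. This yields $0<\gamma_I\le1$ and the bound $\#I\le n$ (otherwise $\tilde X_I^T$ would be invertible, forcing $w^*=0$); together with $\#I=\mathrm{rank}\,\tilde X_I\le\mathrm{rank}\,\tilde X$ the size restriction defining $\Phi_1,\Phi_0$ holds.

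If $t^*\ne0$, then $\gamma_I<1$, so $I\in\Phi_1$, and the formulas of Lemma \ref{crit} give $t^*=\pm t_I$ with the matching $w^*=\pm w_I$; hence $\Delta$ is one of $\Delta(w_I,t_I),\Delta(-w_I,-t_I)$ and $\Delta\le\Delta_1$. The delicate case, which I expect to be the main obstacle, is $t^*=0$ (equivalently $\gamma_I=1$, giving $\mathrm{rank}\,X_I=\#I-1$). Here $\mu^{cap}(w^*,0)=\tfrac12$ is frozen, so $w^*$ maximizes $\mu^{emp}(\cdot,0)$ over $\mathrm{Ker}\,X_{I^*}^T\cap\mathbb S^{n-1}$, which is exactly hypothesis \eqref{A1}; moreover $\mathrm{rank}\,X_{I^*}=\mathrm{rank}\,\tilde X_{I^*}-1$ because every contact point is an affine combination of those in $I$, so $\mathrm{Range}\,X_{I^*}=\mathrm{Range}\,X_I$. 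The difficulty is that this affinely independent reduction need not have the full cardinality $\min\{n,\mathrm{rank}\,\tilde X\}$ demanded by $\Phi_0$, while the value $w_I$ prescribed for $\Phi_0$ is pinned down only up to the possibly high-dimensional kernel $\mathrm{Ker}\,X_I^T$.

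To resolve this I would run Lemma \ref{rankselect} iteratively: as long as $\mathrm{rank}\,\tilde X_{I^*}<\min\{n,\mathrm{rank}\,\tilde X\}$ its hypotheses \eqref{A1}--\eqref{A2} hold, and it produces a larger contact set with the same $\mu^{emp}$ at $t=0$ and the same rank gap $\mathrm{rank}\,\tilde X_{\,\cdot\,}-\mathrm{rank}\,X_{\,\cdot\,}=1$ by \eqref{S1}--\eqref{S2}; the chain \eqref{implichain} guarantees the ranks never exceed $\min\{n,\mathrm{rank}\,\tilde X\}$, so after finitely many steps one reaches a set of exactly that rank. An affinely independent reduction then yields some $\hat I\in\Phi_0$ together with a vector $w_m\in\mathrm{Ker}\,X_{\hat I}^T\cap\mathbb S^{n-1}$ with $\mu^{emp}(w_m,0)=\mu^{emp}(w^*,0)$. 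Finally I would argue that the arbitrary choice of $w_{\hat I}$ is harmless by two subcases. If $\#\hat I=n$, then $\dim\mathrm{Ker}\,X_{\hat I}^T=1$, so $w_m\in\{\pm w_{\hat I}\}$ and $\Delta(w_m,0)$ is one of $\Delta(w_{\hat I},0),\Delta(-w_{\hat I},0)$. If instead $\#\hat I=\mathrm{rank}\,\tilde X<n$, then $\mathrm{rank}\,\tilde X_{\hat I}=\mathrm{rank}\,\tilde X$ forces every $x^j$ to be an affine combination of $\{x^i\}_{i\in\hat I}$, hence every $w\in\mathrm{Ker}\,X_{\hat I}^T$ is orthogonal to all sample points, so $\mu^{emp}(w,0)=1$ and $\Delta(w_{\hat I},0)=\tfrac12=\Delta$ for any selection. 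In either subcase $\Delta\le\Delta_0$; combining this with the case $t^*\neq0$ and the easy inequality yields $\Delta=\max\{\Delta_1,\Delta_0\}$.
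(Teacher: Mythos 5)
Your proposal is correct and takes essentially the same route as the paper: the same reduction of the realizing cap to the equality-constrained max-$t$ problem via Lemma \ref{setopt}, the same dichotomy $\gamma<1$ versus $\gamma=1$ from Lemma \ref{crit}, the same iteration of Lemma \ref{rankselect} in the degenerate case $t^*=0$, and the same two concluding subcases ($\mathrm{rank\,}\tilde{X}_{\hat I}=\mathrm{rank\,}\tilde{X}$ versus $=n$). The only step the paper justifies more fully is the verification of $\gamma_{\hat I}=1$ (hence membership in $\Phi_0$) for the final reduced index set --- the paper re-runs the local-optimality argument with the new realizing pair $(\hat w,0)$ and its full contact set and invokes Lemma \ref{crit} a second time --- whereas you assert it from the preserved rank gap; this is a harmless compression, since rank gap one together with affine independence implies $\gamma_I=1$ by exactly the algebra ($X_I^TX_Iu=(1-\gamma_I)\mathbf{1}$ for $u=(\tilde X_I^T\tilde X_I)^{-1}\mathbf{1}$) you set up for the easy direction.
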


\begin{proof}
Let $\left( w^{\ast },t^{\ast }\right) \in \mathbb{S}^{n-1}\times \left[ -1,1%
\right] $ be such that (see Prop.~\ref{maxsup}) 
\[
\Delta =\Delta(w^*,t^*)=\left|\mu ^{emp}\left( w^{\ast },t^{\ast } \right) 
-\mu^{cap}\left( w^{\ast },t^{\ast }\right) \right|.
\]
Since, by Corollary~\ref{corollary}
\begin{equation} \label{empgreatercap}
\mu ^{emp}\left( w^{\ast },t^{\ast }\right) \geq \mu^{cap}\left( w^{\ast },t^{\ast }\right)
\end{equation}
it follows that
\begin{equation}
 \Delta =\mu ^{emp}\left( w^{\ast },t^{\ast }\right) -\mu^{cap}\left( w^{\ast },t^{\ast }\right).  \label{disctotal}
\end{equation}
We define the (disjoint) index sets 
\begin{equation*}
I^{\ast }:=\{i\in \{1,\ldots ,N\}\,|\,\left\langle w^{\ast
},x^{i}\right\rangle =t^{\ast }\}\,,\,\, J^{\ast }
:=\{i\in \{1,\ldots,N\}\,|\,\left\langle w^{\ast },x^{i}\right\rangle >t^{\ast }\}.
\end{equation*}
From Proposition \ref{nonvoid}, we infer that $I^*\neq\emptyset$. Let 
\begin{equation*}
S:=\left\{ \left( w,t\right) \in \mathbb{S}^{n-1}\times \left[ -1,1\right]
\left\vert 
\begin{array}{cc}
\left\langle w,x^{i}\right\rangle =t & i\in I^{\ast } \\ 
\left\langle w,x^{i}\right\rangle >t & i\in J^{\ast } \\ 
\left\langle w,x^{i}\right\rangle <t & i\in \left\{ 1,\ldots ,N\right\}
\backslash (I^{\ast }\cup J^{\ast })%
\end{array}%
\right. \right\} .
\end{equation*}%
The definitions of $\Delta$ and $(w^{\ast },t^{\ast })$ imply along with (\ref{empgreatercap}) that 
\begin{equation}
(w^{\ast },t^{\ast })\in \underset{(w,t)\in \mathbb{S}^{n-1}\times
\left[-1,1\right] }{\arg \max } \mu ^{emp}(w,t)-\mu ^{cap}(w,t).  \label{am}
\end{equation}%
Since $(w^{\ast },t^{\ast })\in S$ it follows that even 
\begin{equation*}
(w^{\ast },t^{\ast })\in \,\underset{(w,t)\in S}{\arg \max }\,\,
\mu^{emp}(w,t)-\mu ^{cap}(w,t).
\end{equation*}%
We observe that $\mu ^{emp}(w,t)=N^{-1}(\#I^{\ast }+\#J^{\ast })={\rm const}$ for all 
$(w,t)\in S$. Hence,
\begin{equation*}
(w^{\ast },t^{\ast })\in \,\underset{(w,t)\in S}{\arg \min }\,\mu
^{cap}(w,t) \, .
\end{equation*}%
Because, $\mu ^{cap}(w,t)$ depends on $t$ only and is
monotonically decreasing with $t$ (see(\ref{capmeasure})), $(w^{\ast
},t^{\ast })$ is a solution of the optimization problem
\begin{equation*}
\max_{w,t}\left\{ t\,\left\vert \,%
\begin{array}{lc}
\left\langle w,x^{i}\right\rangle =t\,\, & i\in I^{\ast } \\ 
\left\langle w,x^{i}\right\rangle >t\,\, & i\in J^{\ast } \\ 
\left\langle w,x^{i}\right\rangle <t\,\, & i\in \{1,\ldots ,N\}\setminus
(I^{\ast }\cup J^{\ast }) \\ 
\left\langle w,w\right\rangle =1 & 
\end{array}%
\right. \right\} .
\end{equation*}%
Note, that the constraint $t\in [-1,1]$ is implicitly contained in the equality constraints above. Next, choose a subset $\bar{I}^{\ast }\subseteq I^{\ast }$ such that 
\begin{equation}
\#\bar{I}^{\ast }=\mathrm{rank\,} \tilde{X}_{\bar{I}^{\ast }} = \mathrm{rank\,} \tilde{X}_{I^\ast}
%\#\bar{I}^{\ast }=\mathrm{rank}\left\{ \left( {x^{i}}\atop {-1}\right)%\right\} _{i\in \bar{I}^{\ast }}=\mathrm{rank}\left\{ \left( {x^{i}}\atop {-1%
%}\right) \right\} _{i\in I^{\ast }}.
\label{fullrank}
\end{equation}%
(using the notation introduced in the statement of the Theorem). 
By Lemma~\ref{setopt}, the optimization problem above can be
reformulated as 
\begin{equation}
\max_{w,t}\left\{ t\,\left\vert \,%
\begin{array}{lc}
\left\langle w,x^{i}\right\rangle =t\,\, & i\in \bar{I}^{\ast } \\ 
\left\langle w,x^{i}\right\rangle >t\,\, & i\in J^{\ast } \\ 
\left\langle w,x^{i}\right\rangle <t\,\, & i\in \{1,\ldots ,N\}\setminus
(I^{\ast }\cup J^{\ast }) \\ 
\left\langle w,w\right\rangle =1 & 
\end{array}%
\right. \right\} .  \label{opt_red}
\end{equation}%
Since $\left\langle w^{\ast },x^{i}\right\rangle >t^{\ast }$ for $i\in
J^{\ast }$ and $\left\langle w^{\ast },x^{i}\right\rangle <t^{\ast }$ for $%
i\in \{1,\ldots ,N\}\setminus (I^{\ast }\cup J^{\ast })$ and $(w^{\ast
},t^{\ast })$ is a solution of the optimization problem
(\ref{opt_red}), it follows that $(w^{\ast },t^{\ast })$ must be a local solution
of the optimization problem
\begin{equation}
\max_{w,t}\left\{ t\,\left\vert \,\left\langle
w,x^{i}\right\rangle =t\,\quad \left( i\in \bar{I}^{\ast }\right)
;\left\langle w,w\right\rangle =1\right. \right\} .  \label{optred2}
\end{equation}%
By (\ref{fullrank}), this problem satisfies the assumption of
Lemma~\ref{crit} with $X_*:=X_{\bar{I}^{\ast }}$ and
$k:=\#\bar{I}^{\ast }$. According to that Lemma we have that 
$0<\gamma _{\bar{I}^{\ast }}\leq 1$ with $\gamma_{I}$ as introduced 
in the statement of this Theorem.

In the case of $\gamma _{\bar{I}^{\ast }}<1$ it follows from Lemma~\ref{crit} (last statement), that $t^*\neq 0$.
Then, by feasibility of $(w^*,t^*)$ in \eqref{optred2}, we have that
\[
(t^*)^{-1}\tilde{X}_{\bar{I}^*}^Tw^*={\bf 1}\quad (=(1,\ldots ,1)\in\mathbb{R}^{\#\bar{I}^*}).
\]
Consequently, $-\mathbf{1} \in \mathrm{range\,} X_{\bar{I}^{\ast }}^T$, and thus,
\[
\mathrm{rank\,}\tilde{X}_{\bar{I}^{\ast }} =\mathrm{rank\,} {X_{\bar{I}^*}\choose\mathbf{-1}^T}
=\mathrm{rank\,} (X^T_{\bar{I}^*}\mid\mathbf{-1})=\mathrm{rank\,} X^T_{\bar{I}^*}\leq n.
\]
Since also \,$\mathrm{rank\,}\tilde{X}_{\bar{I}^{\ast }}\leq\mathrm{rank}\,\tilde{X}$, we have shown that
$\bar{I}^{\ast }\in \Phi _{1}$.
Therefore, with the
definitions of $t_{I},w_{I}$ in the statement of this Theorem, we infer from
Lemma~\ref{crit} that
$\left( w^{\ast },t^{\ast }\right) \in \left\{ \left( w_{\bar{I}^{\ast }},t_{%
\bar{I}^{\ast }}\right) ,\left( -w_{\bar{I}^{\ast }},-t_{\bar{I}^{\ast
}}\right) \right\}$.
Thus, 
\begin{equation}
\Delta =\Delta (w^{\ast },t^{\ast }) \leq \max \left\{ \Delta (w_{\bar{I}%
^{\ast }},t_{\bar{I}^{\ast }}),\Delta (-w_{\bar{I}^{\ast }},-t_{\bar{I}%
^{\ast }})\right\}
\leq\Delta_1 \label{delta1}  
\end{equation}
with $\Delta_1$ as introduced in the statement of this Theorem.

The remaining part of this proof is devoted to the case $\gamma _{\bar{I}^{\ast }}=1$. From Lemma~\ref%
{crit} we observe that $t^{\ast }=0$, and, $\mathrm{rank\,} X_{\bar{I}^\ast} = \#\bar{I}^{\ast } - 1$.
The second equality in (\ref{fullrank}) along with $\bar{I}^\ast \subseteq I^\ast$ yields that
$\mathrm{rank\,} X_{{I}^\ast} = \mathrm{rank\,} X_{\bar{I}^\ast}$. Hence, the first equality in \eqref{fullrank} provides the relation
\begin{equation}\label{kernX}
 \mathrm{rank\,} X_{{I}^\ast} = \mathrm{rank\,} \tilde{X}_{{I}^\ast} - 1.
\end{equation}
Moreover, by definition of $I^{\ast }$, one has that
$w^{\ast}\in \mathrm{Ker\,}X_{I^{\ast }}^{T}\cap \mathbb{S}^{n-1}$, so that 
\begin{equation}
w^{\ast }\in \!\!\underset{w\in \mathrm{Ker\,}X_{I^{\ast }}^{T}\cap \mathbb{S}%
^{n-1}}{\arg \max } \!\! \mu ^{emp}(w,0)-\mu ^{cap}(w,0) \,=%
\!\! \underset{w\in \mathrm{Ker\,}X_{I^{\ast }}^{T}\cap \mathbb{S}^{n-1}}{\arg
\max } \mu ^{emp}(w,0)-\tfrac{1}{2}  \label{delmuemp}
\end{equation}%
as a consequence of (\ref{am}). Therefore, 
\begin{equation}
w^{\ast }\in A:=\underset{w\in \mathrm{Ker\,}%
X_{I^{\ast }}^{T}\cap \mathbb{S}^{n-1}}{\arg \max }\mu ^{emp}(w,0).
\label{claimmuemp}
\end{equation}%
Since $\mu ^{emp}(w^{\ast},0)\geq \tfrac{1}{2}$, it holds that
\begin{equation}
\mu ^{emp}(w,0)\geq \tfrac{1}{2}\quad \forall w\in A .  \label{largeronehalf}
\end{equation}%

By \eqref{implichain}, $w^{\ast }\in \mathrm{Ker\,}X_{I^{\ast }}^{T}\cap \mathbb{%
S}^{n-1}$ implies that $\mathrm{rank\,}\tilde{X}_{I^{\ast }} \leq \min \big\{ n,\mathrm{rank\,}\tilde{X}\big\}$.
We claim the existence of some index set $\hat{I}$ and of some vector $\hat{w%
}$ such that 
\begin{equation}\label{claim} 
\begin{aligned}
&I^{\ast } \subseteq \hat{I}\subseteq \left\{ 1,\ldots ,N\right\} ,\,\,%
\mathrm{rank\,}\tilde{X}_{\hat{I}}=\min \big\{ n,\mathrm{rank\,}\tilde{X}\big\} , \\
&\hat{w} \in \mathrm{Ker\,}X_{\hat{I}}^{T}\cap \mathbb{S}^{n-1},\,\,\mu
^{emp}(\hat{w},0)=\mu ^{emp}(w^{\ast },0).
\end{aligned}
\end{equation}
If $\mathrm{rank\,}\tilde{X}_{I^{\ast }} = \min \big\{ n,\mathrm{rank\,}\tilde{X}\big\}$, then we may choose 
$\hat{I}:=I^*$ and $\hat{w}:=w^*$ in \eqref{claim}.
Otherwise,  $\mathrm{rank\,}\tilde{X}_{I^{\ast }} < \min \big\{ n,\mathrm{rank\,}\tilde{X}\big\}$ and we make use of Lemma~\ref{rankselect} starting with the data $I_0:=I^\ast$ and $w_0 := w^\ast$. Observe that by virtue of \eqref{kernX} and \eqref{claimmuemp}, $I_0$ and $w_0$ satisfy the assumptions \eqref{A1} and \eqref{A2} of that Lemma. Accordingly, we derive the existence of some index set $I_1\supseteq I_0$ and $w_1$ satisfying the relations \eqref{S1} and \eqref{S2}.
In particular, $\mathrm{Ker\,}X_{I_1}^{T}\subseteq\mathrm{Ker\,}X_{I_0}^{T}$, whence both relations in
\eqref{S1} yield that
\[
w_1\in \underset{w\in \mathrm{Ker\,}X_{I_1}^{T}\cap \mathbb{S}
^{n-1}}{\arg \max }\mu ^{emp}(w,0).
\]
Moreover, we infer from \eqref{A2} and \eqref{S2} that $\mathrm{rank\,}X_{I_1} =\mathrm{rank\,}\tilde{X}_{I_1}-1$ and from the first relation in \eqref{S1} and \eqref{implichain} that $\mathrm{rank\,}\tilde{X}_{I_1}\leq\min \big\{ n,\mathrm{rank\,}\tilde{X}\big\}$. 

Now,
if $\mathrm{rank\,}\tilde{X}_{I_1} = \min \big\{ n,\mathrm{rank\,}\tilde{X}\big\}$, then we may choose 
$\hat{I}:=I_1$ and $\hat{w}:=w_1$ in \eqref{claim} due to \eqref{S1} and $w_0=w^*$. Otherwise, $\mathrm{rank\,}\tilde{X}_{I_1} < \min \big\{ n,\mathrm{rank\,}\tilde{X}\big\}$
and so the assumptions \eqref{A1} and \eqref{A2} of Lemma \ref{rankselect} are also satisfied for $I_1$ and $w_1$ instead of $I_0$ and $w_0$. This allows us to apply  Lemma \ref{rankselect} again. In this way, a sequence 
of index sets $I_k$ and of points $w_k$ ($k=1,2,\ldots$) is obtained for which $I^*=I_0\subseteq I_k$ and by \eqref{S1} and \eqref{S2}
\[
w_k \in \mathrm{Ker\,} X_{I_k}^{T}\cap\mathbb{S}^{n-1}, \,\, \mu^{emp}(w_k,0)=\mu^{emp}(w_0,0),\,\,
 \mathrm{rank\,}\tilde{X}_{I_k} = \mathrm{rank\,}\tilde{X}_{I_{k-1}} + z_k,
\]
where $z_k\in\mathbb{N}$ and $z_k\geq 1$. Since $\mathrm{rank\,}\tilde{X}_{I_k} \leq \min \big\{ n,\mathrm{rank\,}\tilde{X}\big\}$ by \eqref{implichain},
the last relation implies that, after finitely many steps, we arrive at the situation $\mathrm{rank\,}\tilde{X}_{I_k} = \min \big\{ n,\mathrm{rank\,}\tilde{X}\big\}$, so that we may define $\hat{I}:=I_k$ and $\hat{w}:=w_k$ in \eqref{claim}. This finishes the proof of \eqref{claim}.

%\vspace{0.2cm}

Next, from \eqref{disctotal} and \eqref{claim} we know that
\begin{equation}
\Delta =\mu ^{emp}(w^{\ast },0)-\tfrac{1}{2}=\mu ^{emp}(\hat{w},0)-\tfrac{1}{2} .  \label{delest}
\end{equation}
This relation shows that $(\hat{w},0)$ realizes the discrepancy $\Delta$ as much as  $(w^*,0)$.
Therefore, we may assume that  $(w^*,t^*)$ is $(\hat{w},0)$ in the beginning of our proof until \eqref{optred2}. In particular, analogously to the index set $I^*$ introduced there, we define
\[
I_{\ast }:=\{i\in \{1,\ldots ,N\}\mid\langle\hat{w},x^{i}\rangle =0\}\]
Following the previous arguments from \eqref{fullrank} to \eqref{optred2}, we may find an index set $\bar{I}_*\subseteq I_*$ such that
\begin{equation}
\#\bar{I}_*=\mathrm{rank\,} \tilde{X}_{\bar{I}_*} = \mathrm{rank\,} \tilde{X}_{I_*}.\label{optred3}
\end{equation}
In particular,
\begin{equation}\label{whatkern}
 \hat{w} \in \mathrm{Ker}\,X^T_{\bar{I}_*}\cap\mathbb{S}^{n-1}.
\end{equation}
Moreover, $(\hat{w},0)$ is a local solution of the optimization problem
\begin{equation}\label{optred4}
\max_{w,t}\left\{ t\,\left\vert \,\left\langle
w,x^{i}\right\rangle =t\,\quad \left( i\in \bar{I}_*\right)
;\left\langle w,w\right\rangle =1\right. \right\} . 
\end{equation}
By \eqref{optred3}, this problem satisfies the assumption of Lemma \ref{crit} with $X_*:=X_{\bar{I}_*}$ and $k:=\#\bar{I}_*$. According to that Lemma (last statement) we have that $\gamma _{\bar{I}_*}=1$ with $\gamma_{I}$ as introduced 
in the statement of this Theorem.
Applying \eqref{implichain} to $\hat{w}\in \mathrm{Ker}\,X^T_{I_*}\cap\mathbb{S}^{n-1}$, we observe that 
\[
\mathrm{rank\,} \tilde{X}_{I_*}\leq \min \big\{ n,\mathrm{rank\,}\tilde{X}\big\}.
\]
On the other hand, since $\hat{I}\subseteq I_*$ by \eqref{claim} and by definition of $I_*$, the rank relation in \eqref{claim} leads to
\[
\mathrm{rank\,} \tilde{X}_{I_*}\geq\mathrm{rank\,} \tilde{X}_{\hat{I}}=\min \big\{ n,\mathrm{rank\,}\tilde{X}\big\},
\] 
whence, along with \eqref{optred3}
\begin{equation}\label{lastrel}
\#\bar{I}_*=\mathrm{rank\,} \tilde{X}_{\bar{I}_*} =\min \big\{ n,\mathrm{rank\,}\tilde{X}\big\}.
\end{equation}
Summarizing, we have shown that $\bar{I}_*\in\Phi_0$.

If in \eqref{lastrel} $\mathrm{rank\,}\tilde{X}_{\bar{I}_*}=\mathrm{rank\,}\tilde{X}$, then 
 there exist coefficients
$\lambda_j^i$ such that
\[
{x^i\choose -1}=\sum_{j\in\bar{I}_*}\lambda_j^i \,{x^j\choose -1}\quad\forall i=1,\ldots ,N.
\]
Therefore, we have for all $ i=1,\ldots ,N$ and all $w\in\mathrm{Ker}\,X^T_{\bar{I}_*}$ that
\[
\langle x^i,w\rangle=\left\langle{x^i\choose -1},{w\choose 0}\right\rangle=
\sum_{j\in\bar{I}_*}\lambda_j^i \,\left\langle {x^j\choose -1},{w\choose 0}\right\rangle=
\sum_{j\in\bar{I}_*}\lambda_j^i \,\langle x^j,w\rangle=0.
\]
This amounts to saying that $\mu^{emp}(w,0)=1$ for all these $w$ and, so, 
\[
\Delta(w,0)=|\mu^{emp}(w,0)-\mu^{cap}(w,0)|=1/2\quad\forall 
w\in\mathrm{Ker}\,X^T_{\bar{I}_*}\cap\mathbb{S}^{n-1}.
\]
We conclude from \eqref{whatkern} and \eqref{delest} that
\[
\Delta =\left\vert \mu ^{emp}(\hat{w},0)-\tfrac{1}{2}\right\vert=\Delta (\hat{w},0)=1/2=\Delta (w,0)\quad\forall w\in\mathrm{Ker}\,X^T_{\bar{I}_*}\cap\mathbb{S}^{n-1}.
\]
Therefore, the value of $\Delta (w_{\bar{I}_*},0)$ in the definition of $\Delta_0$ (see statement of this Theorem) does
not depend on the choice of $w_{\bar{I}_*}\in\mathrm{Ker}\,X^T_{\bar{I}_*}\cap\mathbb{S}^{n-1}$. It follows from 
$\bar{I}_*\in\Phi_0$ that $\Delta\leq\Delta_0$.

Otherwise, if in \eqref{lastrel} $\mathrm{rank\,}\tilde{X}_{\bar{I}_*} 
=n$, then $\dim \mathrm{Ker\,}\tilde{X}_{\bar{I}_*}^{T}=1$ and so there exists some $(\tilde{w},\tilde{t})\in\mathbb{S}^n$ with $\mathrm{Ker\,}\tilde{X}_{\bar{I}_*}^{T}={\rm span}\{(\tilde{w},\tilde{t})\}$. Let 
$w\in\mathrm{Ker}\,X^T_{\bar{I}_*}\cap\mathbb{S}^{n-1}$ be arbitrary.
Then, $(w,0)\in\mathrm{Ker\,}\tilde{X}_{\bar{I}_*}^{T}$ and, hence, there is some $\lambda\in\mathbb{R}$ with
$(w,0)=\lambda (\tilde{w},\tilde{t})$. Clearly, $\lambda\neq 0$ by $w\in\mathbb{S}^{n-1}$. It follows that $\tilde{t}=0$, whence $\tilde{w}\in\mathbb{S}^{n-1}$ and $|\lambda|=1$. Therefore, $w=\pm\tilde{w}$. Thus, we have shown that 
$w\in\{\tilde{w},-\tilde{w}\}$ for all $w\in\mathrm{Ker}\,X^T_{\bar{I}_*}\cap\mathbb{S}^{n-1}$. On the other hand, 
$\hat{w}\in\{\tilde{w},-\tilde{w}\}$ by \eqref{whatkern}. Therefore,
\begin{eqnarray*}
\Delta &\!=\!& \left\vert \mu ^{emp}(\hat{w},0)-\tfrac{1}{2}\right\vert \,=\, \Delta (%
\hat{w},0) \leq \max \left\{ \Delta (\tilde{w},0),\Delta (-\tilde{w},0)\right\} \\
        &\!=\!& \max \left\{ \Delta (w,0),\Delta (-w,0)\right\}\qquad\forall w\in\mathrm{Ker\,}X^T_{\bar{I}_*}\cap \mathbb{S}^{n-1}.
\end{eqnarray*}
As in the previous case, the value of $\Delta (w_{\bar{I}_*},0)$ in the definition of $\Delta_0$ does
not depend on the choice of $w_{\bar{I}_*}\in\mathrm{Ker}\,X^T_{\bar{I}_*}\cap\mathbb{S}^{n-1}$. Again, 
$\bar{I}_*\in\Phi_0$ implies that $\Delta\leq\Delta_0$.

Summarizing, our proof has shown by case distinction that necessarily $\Delta\leq\Delta_1$ (see \eqref{delta1}) or $\Delta\leq\Delta_0$. Therefore, $\Delta \leq \max \{\Delta
_{1},\Delta _{0}\}$.  On the other hand, each of the quantities 
$\Delta _{1},\Delta _{0}$ is either zero or corresponds to a
concrete value $\Delta (w,t)$ for some $w\in \mathbb{S}^{n-1}$ and $\,t\in %
\left[ -1,1\right] $. Hence, in any case $\max \{\Delta _{1},\Delta
_{0}\}\leq \Delta $. This finishes the proof.  
\end{proof}
 
%\noindent
We want to conclude this section with some algorithmic remarks. The formula provided by the main theorem is appropriate for easy implementation.
To compute the cap discrepancy for a given point set one has to consider all possible selections $I\subseteq\{1,\ldots,N\}$ with cardinality less than or equal to $\min\{n,\mathrm{rank\,}\tilde{X}\}$
and one has to check whether the selection is included in one of the two sets $\Phi_1$ or $\Phi_0$. This check implies first a verification of $\mathrm{rank}~\tilde X_I$, and secondly,
if applicable, the computation of $\gamma_I =\mathbf{1}^{T}\left( \tilde{X}_{I}^{T}\tilde{X}_{I}\right) ^{-1}\mathbf{1}$. For these selected $I$ one has to compute the local discrepancy
by the given formulas. Finally, the discrepancy is found as the maximum of the considered local discrepancies.
A Matlab implementation of the enumeration formula for the spherical cap discrepancy provided by the Theorem is accessible through the link:
\href{https://www.wias-berlin.de/people/heitsch/capdiscrepancy}{https://www.wias-berlin.de/people/heitsch/capdiscrepancy}.

We observe that the cardinality of index sets to be checked in the
proven formula is at most%
\begin{equation*}
\sum\limits_{i=1}^{\min\{n,\mathrm{rank\,}\tilde{X}\}}\binom{N}{i}.
\end{equation*}
Whether calculating the spherical cap discrepancy is NP-hard (or W[1]-hard) is left
open for future work.
Clearly, this aspect of complexity limits the application of the formula to
low-dimensional spheres and moderate sample sizes. Hence, it will not be
suitable for verifying asymptotic aspects of sampling schemes. On the other
hand, it may be used to correctly calibrate the efficiency of sampling
schemes within a certain range of the sample size. 

\section{Numerical Illustration}

\noindent
In this section we illustrate the application of the derived formula for the spherical cap discrepancy to spheres $\mathbb{S}^2$ to $\mathbb{S}^5$ with sample sizes reaching
from 2000 to 100 depending on dimension. 
Samples were generated by normalizations of Monte Carlo simulated independent Gaussian distributions which are approximations of the uniform distribution on the sphere.
For the sake of comparison, we oppose the results to the application of an easily computable lower estimate of the discrepancy as it was used, e.g., in \cite{brauchart}: Given a sample $\{x^1,\ldots ,x^N\}$, we clearly have that
\[
\tilde{\Delta}:=\max_{i=1,\ldots ,N}\sup_{t\in \left[ -1,1\right] }\left\vert \mu
^{emp}\left( x^{i},t\right) -\mu ^{cap}\left( x^{i},t\right) \right\vert
\leq \Delta .
\]
\begin{figure}[h]
\includegraphics[width=0.499\textwidth]{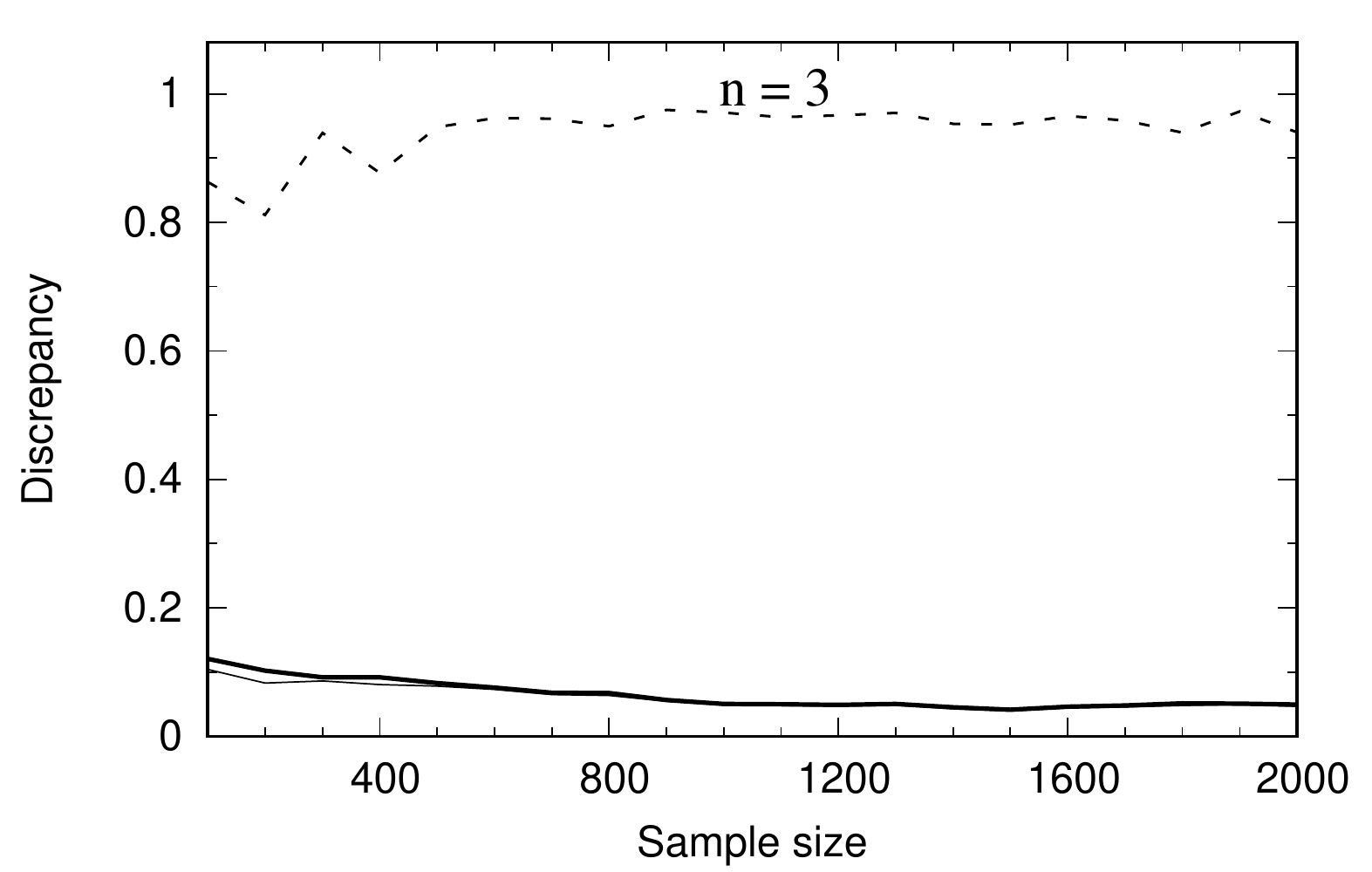}
\includegraphics[width=0.499\textwidth]{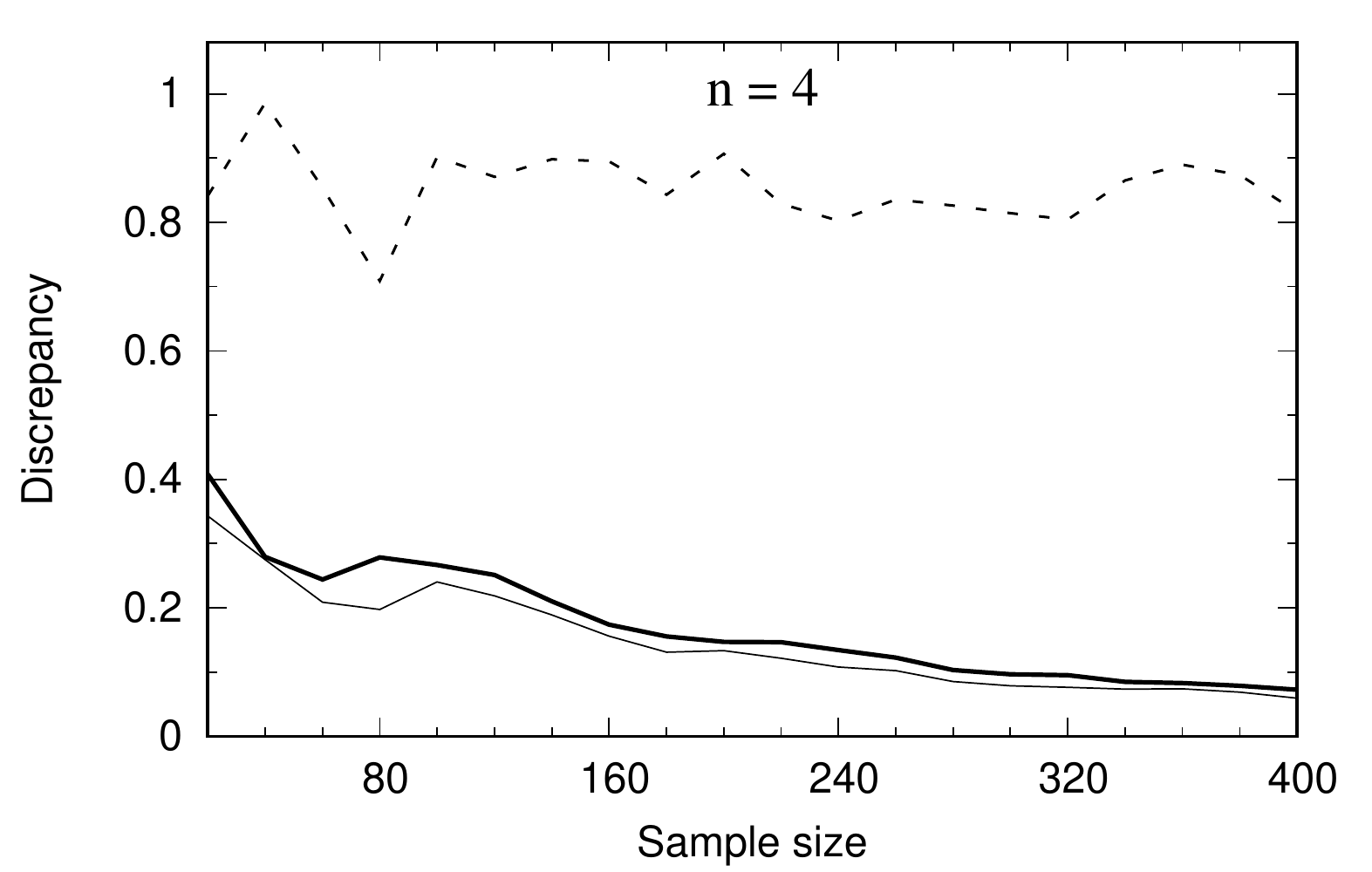}\\
\includegraphics[width=0.499\textwidth]{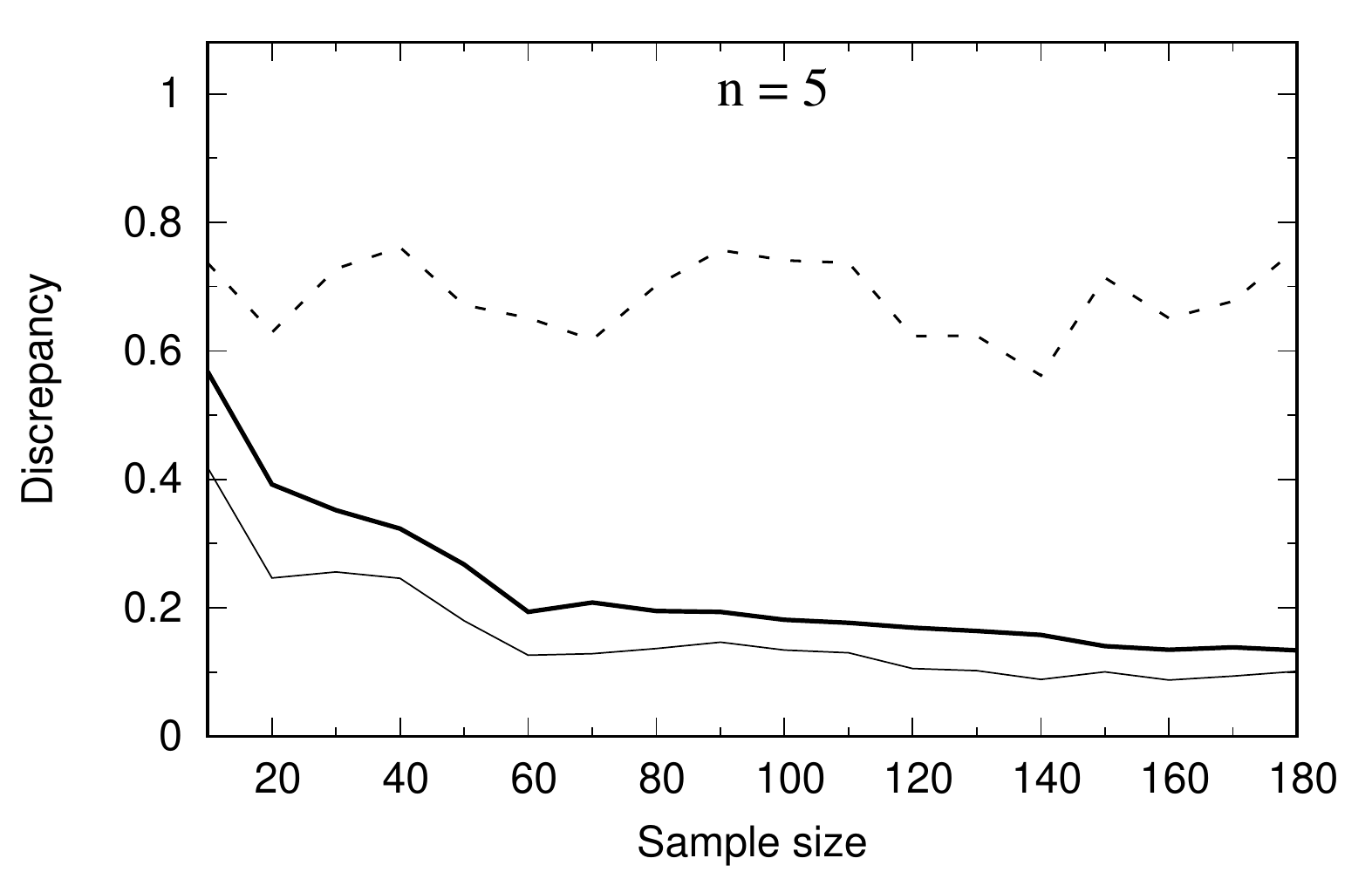}
\includegraphics[width=0.499\textwidth]{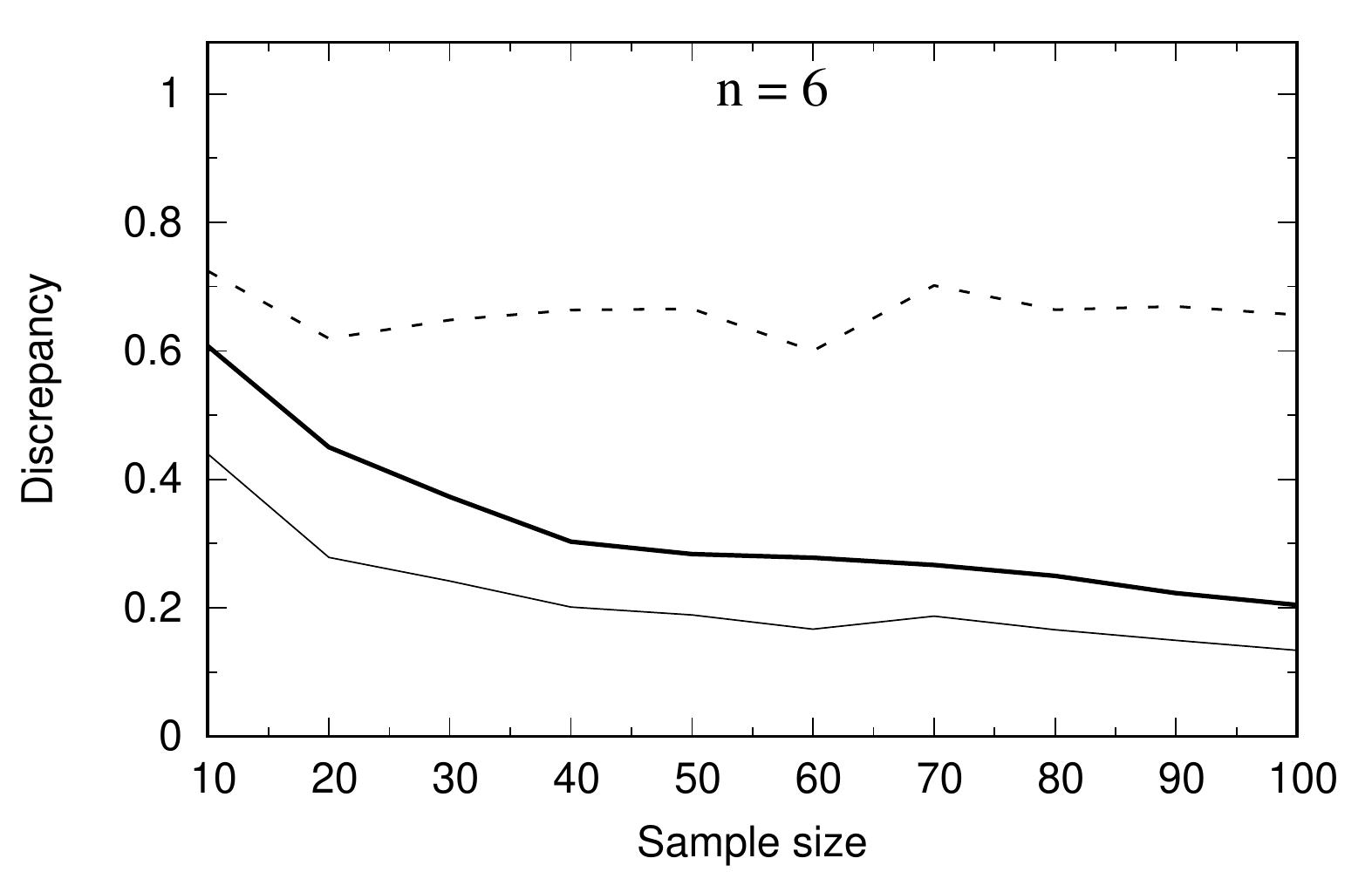}
\caption{Plot of discrepancy $\Delta$ (thick line), of its lower estimate $\tilde{\Delta}$ (thin line) and of the ratio $\tilde{\Delta}/\Delta$ (dashed line) for different dimensions ($n=3,4,5,6$) and sample sizes.}
\label{diagram}
\end{figure}

\noindent
Fig. \ref{diagram} shows the numerical results. We observe the following trends:
\begin{itemize}
\item Both, $\Delta$ and $\tilde{\Delta}$ are decreasing with increasing sample size.
\item The absolute difference between $\Delta$ and $\tilde{\Delta}$ decreases with the sample size.
\item The absolute difference between $\Delta$ and $\tilde{\Delta}$ increases with the dimension of the sphere.
\item The ratio between $\tilde{\Delta}$ and $\Delta$ is basically constant for variable sample size in each dimension of the sphere (with different values of the constant).
\item The constant itself is decreasing with the dimension of the sphere.\end{itemize}
In particular, it seems that the discrepancy can be replaced by its lower estimate without loss of information in $\mathbb{S}^2$ starting from a sample size of approximately 500. For larger dimension or sample size, it appears that at least the decay rate with respect to the sample size is well reflected by the lower estimate (approximately constant ratio with the true discrepancy), while the deviation from the true discrepancy becomes significant.

\begin{table}[h]
 \setlength{\tabcolsep}{2.0\tabcolsep}
 \begin{tabular*}{\textwidth}{cccccccccc}
  \hline
  \rule{0cm}{2.5ex}{}
      $N$        && 10  & 20 &  40 &    100 &     180  &    400  &   1000  &    2000 \\
  \hline
  \rule{0cm}{2.5ex}{}
  $\mathbb{S}^2$ &&  0  &  0 &   0 &      3 &       30 &    499  &  15924  &  252313 \\
  $\mathbb{S}^3$ &&  0  &  0 &   2 &    139 &     1911 &  77449  &       -    &    -   \\
  $\mathbb{S}^4$ &&  0  &  0 &  14 &   1932 &    48134 &    -    &       -    &    -   \\
  $\mathbb{S}^5$ &&  0  &  1 &  92 &  32658 &     -    &    -    &       -    &    -   \\
  \hline
 \end{tabular*}
 \caption{CPU time (in seconds) of the enumeration formula for selected instances.}
 \label{table1}
\end{table}
%

%\noindent
All computations are performed on a standard computer with single CPU (3.2 GHz).
Table~\ref{table1} displays the CPU time for selected instances of the tested range of $N$ and $n$.
%

%\noindent
In order to illustrate even more directly the application of the proven formula, we provide a comparison of 4 sampling schemes on $\mathbb{S}^2$ for small sample sizes ($\leq 1000$).
The first two methods are based on the already mentioned fact that the normalization to unit length of a standard Gaussian distribution 
$\mathcal{N}(0_m,I_m)$ yields a uniform distribution on $\mathbb{S}^{m-1}$. Therefore, we may simulate the Gaussian distribution
via Monte Carlo (MC) or via Quasi-Monte Carlo (QMC). As an alternative, we follow the proposal in 
\cite{brauchart}, to use the equal-area Lambert transform from the unit square to $\mathbb{S}^2$, again for MC and QMC. For QMC, we applied in both cases Sobol' sequences as a special case of low-discrepancy sequences.  

\begin{figure}[h]
\includegraphics[width=0.499\textwidth]{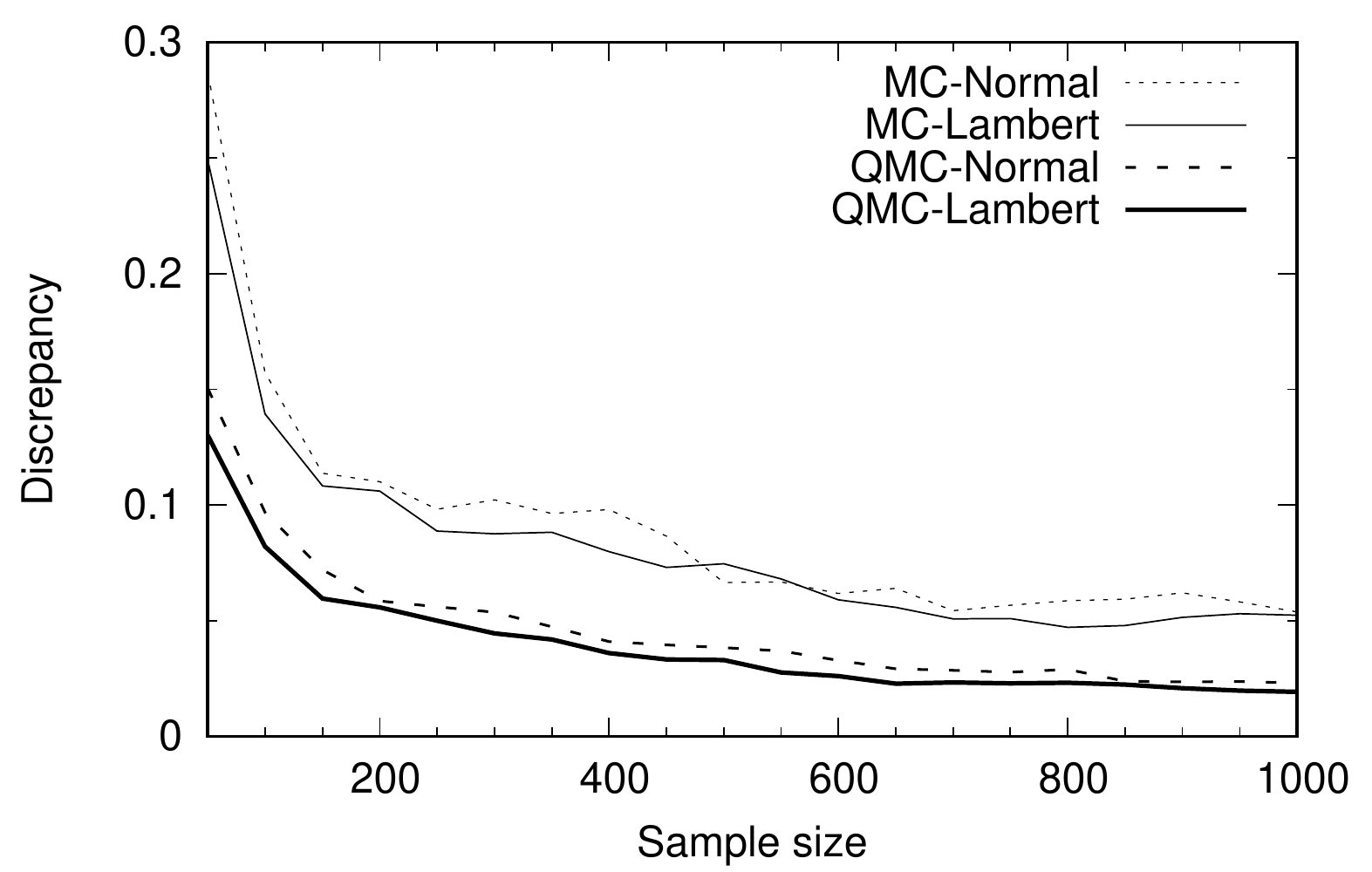}
\includegraphics[width=0.499\textwidth]{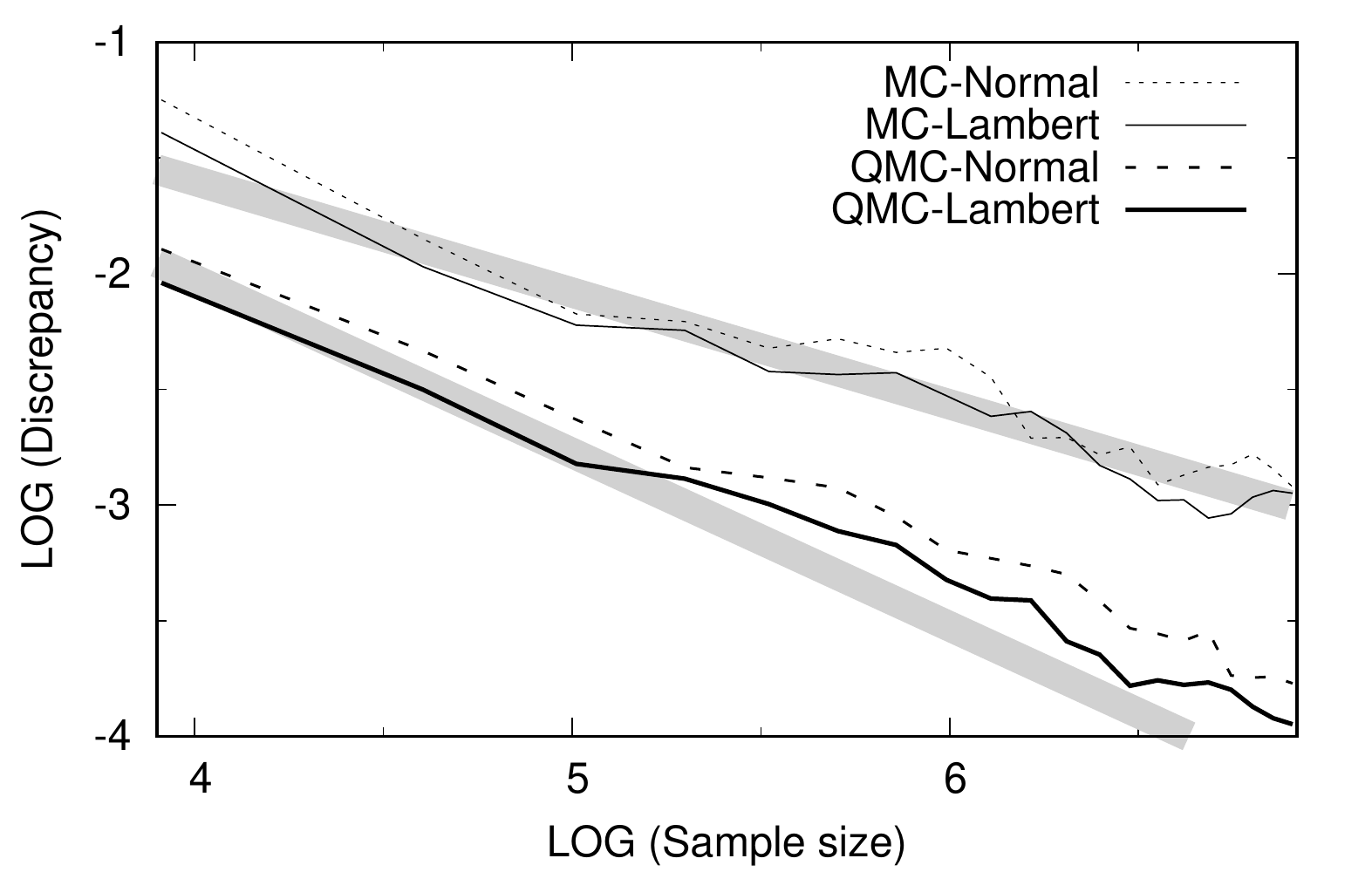}
\caption{Discrepancy as a function of sample size for 4 different sampling schemes in original form (left) and Log-Log Plot (right). For details see text.}
\label{decay}
\end{figure}

%\noindent
Figure \ref{decay} (left) shows the corresponding plots of the discrepancy as a function of the sample size %for 20 steps at 50 sampling points.
(20 steps with an increment of sample size by 50 points at a time).
Not surprisingly, the QMC-based samples clearly outperform their MC counterparts. Moreover, in both classes, the Lambert transformation yields slightly better results than the normalization of Gaussians. The Log-Log plot (right) incorporates two gray strips with slopes identical to -1/2 (upper strip) and -3/4 (lower strip) with empirically shifted intercepts. It can be seen that the MC-based methods are closely tied with the expected decay rate of -1/2, whereas the QMC counterparts get a slope slightly above the optimal rate of -3/4.

\section*{Acknowledgement}

\noindent
%The authors would like to express their gratitude to two anonymous referees for providing valuable hints which improved the presentation of this paper.
The first author acknowledges support by DFG in the Collaborative Research
Centre CRC/Transregio 154, Project B04. The second author acknowledges
support by the FMJH Program Gaspard Monge in optimization and operations
research including support to this program by EDF. Moreover, the authors thank M. Messerschmid (Humboldt University Berlin) for discussion on the topic of this paper.

\bibliography{bibliography}
\bibliographystyle{plain}

\end{document}